\newtheorem{theorem}{Theorem}[section]
\newtheorem{corollary}[theorem]{Corollary}
\newtheorem{proposition}[theorem]{Proposition}
\newtheorem{definition}[theorem]{Definition}
\newtheorem{example}[theorem]{Example}
\newtheorem{remark}[theorem]{Remark}
\newcommand{\beqa}{\begin{eqnarray*}}
\newcommand{\eeqa}{\end{eqnarray*}}
\DeclareMathOperator*{\essupp}{ess\,sup\,}
\newcommand{\field}[1]{\mathbb{#1}}
\newcommand{\bR}{\field{R}}        %  real numbers
\newcommand{\bC}{\field{C}}        %  complex numbers
\def\la{\lambda}
\def\cF{\mathcal{F}}              % Calligraphic Letters
\def\cS{\mathcal{S}}
\def\cD{\mathcal{D}}
\def\cB{\mathcal{B}}
\def\cE{\mathcal{E}}
\def\cG{\mathcal{G}}
\def\cM{\mathcal{M}}
\def\cA{\mathcal{A}}
\def\rd{\bR^d}
\def\rdd{{\bR^{2d}}}
\def\intrd{\int_{\rd}}
\def\<{\left<}
\def\>{\right>}
\def\mv1{M_v^1}
\def\phas{(x,\xi )}
\def\mn{(m,n)}
\def\mn'{(m',n')}
\newcommand{\norm}[1]{\lVert#1\rVert}
\def\Ren{\mathbb{R}^d}
\def\Sn2{S_{2}(L^{2}(\Ren))}
\def\S1{S_{1}(L^{2}(\Ren))}
\def\sig00{\sigma_{0,0}}
\def\la{\langle}
\def\ra{\rangle}
\begin{document}
\begin{abstract} 
	Metaplectic Wigner distributions generalize the most popular time-frequency representations, such as the short-time Fourier transform (STFT) and $\tau$-Wigner distributions, using metaplectic operators. However, in order for a metaplectic Wigner distribution to measure local time-frequency concentration of signals, the additional property of shift-invertibility is fundamental. In addition, metaplectic atoms provide different ways to model signals. Namely, signals can be written as discrete superpositions of these operators, providing original ways to represent signals, with applications to machine learning, signal analysis, theory of pseudodifferential operators, to mention a few. Among all shift-invertible distributions, Wigner-decomposable metaplectic Wigner distributions provide the most straightforward generalization of the STFT. In this work, we focus on metaplectic atoms of Wigner-decomposable shift-invertible metaplectic distributions and characterize the associated metaplectic Gabor frames. 
\end{abstract}

\title[Metaplectic Gabor frames of Wigner-decomposable distributions]{Metaplectic Gabor frames of Wigner-decomposable distributions}

\author{Elena Cordero}
\address{Universit\`a di Torino, Dipartimento di Matematica, via Carlo Alberto 10, 10123 Torino, Italy}
\email{elena.cordero@unito.it}
\author{Gianluca Giacchi}
\address{Università di Bologna, Dipartimento di Matematica,  Piazza di Porta San Donato 5, 40126 Bologna, Italy; Institute of Systems Engineering, School of Engineering, HES-SO Valais-Wallis, Rue de l'Industrie 21, 1950 Sion, Switzerland; Lausanne University Hospital and University of Lausanne, Lausanne, Department of Diagnostic and Interventional Radiology, Rue du Bugnon 46, Lausanne 1011, Switzerland. The Sense Innovation and Research Center, Avenue de Provence 82
1007, Lausanne and Ch. de l’Agasse 5, 1950 Sion, Switzerland.}
\email{gianluca.giacchi2@unibo.it}
%\author{Luigi Rodino}
%\address{}
%\email{}

\thanks{}
\subjclass[2010]{42B35,42A38}

%\subjclass{35S05,35S30,
%47G30, 42C15}
%\date{}
\keywords{Time-frequency analysis, modulation spaces, Wiener amalgam spaces, time-frequency representations, metaplectic group, symplectic group}
\maketitle

\section{Introduction}
Decomposing finite-energy signals as superpositions of fundamental functions (\textit{atoms}), with preservation of the energy content, is one of the main issue in signal analysis. If $\cB=\{\varphi_k\}_{k}\subseteq L^2(\rd)$ is a basis of $L^2(\rd)$, then every signal $f\in L^2(\rd)$ can be  decomposed as 
\[
	f = \sum_k\la f,\varphi_k\ra \varphi_k,
\]
and $\norm{f}_2=(\sum_k|\la f,\varphi_k\ra|^2)^{1/2}$. This decomposition is unique, because $\cB$ is a basis, but in the applications it may be useful to have different representations for the same signal, and this is pursued using frames. In time-frequency analysis, where the time and frequency behaviours of signals are treated simultaneously, \textit{Gabor frames} have become popular. For a function $g\in L^2(\rd)$ and $(x,\xi)\in\rdd$, the associated time-frequency shift is defined as $\pi(x,\xi)g(t)=e^{2\pi i\xi\cdot t}g(t-x)$, that is,  the composition of the  \textit{translation} $T_x g(t)=g(t-x)$ and \textit{modulation} $M_\xi g(t)=e^{2\pi i \xi\cdot t}f(t)$ operators, $x,\xi\in\rd$, see Section \ref{sec:preliminaries} for details. A \textit{Gabor frame} is a family $\mathcal{G}(g,\Lambda)=\{\pi(\lambda)g\}_{\lambda\in\Lambda}$, where $g\in L^2(\rd)\setminus\{0\}$ and $\Lambda\subseteq\rdd$ is a  discrete set (often a lattice, i.e., a discrete subgroup of $\rdd$), with the following energy-preservation property: there exist $A,B>0$ such that 
\begin{equation}\label{GF}
	A\norm{f}_2^2\leq\sum_{\lambda\in\Lambda}|\la f,\pi(\lambda)f\ra|^2\leq B\norm{f}_2^2, \qquad f\in L^2(\rd).
\end{equation}
If $\cG(g,\Lambda)$ is a Gabor frame, then there exists a window $\gamma=\gamma(g)\in L^2(\rd)$ such that
\begin{equation}\label{Decompintro}
	f=\sum_{\lambda\in\Lambda}\la f,\pi(\lambda)g\ra\pi(\lambda)\gamma, \qquad f\in L^2(\rd),
\end{equation}
with unconditional convergence in the norm of $L^2(\rd)$. Stated differently, Gabor frames allow to decompose signals into discrete superpositions of time-frequency shifts, which take over the role of building blocks for finite-energy signals. The coefficients in (\ref{Decompintro}) define the \textit{short-time Fourier transform} (STFT). Namely, if $g\in L^2(\rd)\setminus\{0\}$ is fixed and $f\in L^2(\rd)$, the STFT of $f$ with respect to the window $g$ is the function defined as:
\begin{equation}\label{STFT}
	V_gf(x,\xi)=\la f, \pi(x,\xi)g\ra=\cF_2\mathfrak{T}_{ST}(f\otimes\bar g)(x,\xi), \qquad (x,\xi)\in\rdd,
\end{equation}
where $\cF_2$ is the partial Fourier transform with respect to the second variable and $\mathfrak{T}_{ST}$ is a rescaling operator, see Section \ref{sec:preliminaries} below. The STFT is a joint time-frequency representation of $f$, it describes the local time-frequency behaviour of $f$ and it allows to recover both $f$ and its Fourier transform, via inversion formulae. However, in many contexts, the STFT is not the best time-frequency representation in terms of mathematical properties, and it is preferred to appeal to different distributions, such as the $\tau$-Wigner distributions \cite{bogetal}, defined as
\begin{equation}\label{tauWigner}
	W_\tau(f,g)(x,\xi)=\int_{\rd}f(x+\tau t)\overline{g(x-(1-\tau)t)}e^{-2\pi i\xi\cdot t}dt, \quad f,g\in L^2(\rd),
\end{equation}
for  $\phas\in\rdd$, $\tau\in\bR$. As for the STFT, these distributions can be written as composition of operators. Namely,
\[
	W_\tau(f,g)(x,\xi)=\cF_2\mathfrak{T}_\tau (f\otimes\bar g)(x,\xi), \qquad f,g\in L^2(\rd),\,\quad \phas\in\rdd,
\]
where $\mathfrak{T}_\tau$ is a suitable rescaling operator, see Section \ref{sec:preliminaries} in the sequel. However, this is not the only similarity that $\tau$-Wigner distributions share with the STFT. In fact, it was proved in \cite{CG2023frames} that the $\tau$-Wigner distributions ($\tau\neq0,1$) can be rewritten in terms of $L^2$ inner products as
\begin{equation}\label{tau-atoms}
	W_\tau(f,g)(x,\xi)=\la f,\pi_\tau(x,\xi)g\ra, \qquad f,g\in L^2(\rd),\,\quad \phas\in\rdd,
\end{equation}
where $\pi_\tau(x,\xi)$ is, up to  rescaling and a chirp phase factor, the composition of a rescaled time-frequency shift and a unitary change of variables. 

Another issue is the frame property. Gabor frames are enormously popular in applications such that  imaging, radar, audio processing
and quantum mechanics \cite{book}. Though, in some framework it is  more useful to employ \emph{generalized versions} of them, see for example \cite{DGosson}.  In our context, let us first consider the atoms generating from the $\tau$-Wigner distributions, defined in \eqref{tau-atoms} above.   Given $g\in L^2(\rd)\setminus\{0\}$ and $\Lambda\subseteq\rdd$ discrete, if the family $\cG_\tau(g,\Lambda)=\{\pi_\tau(\lambda)g\}_{\lambda\in\Lambda}$ defines a frame for $L^2(\rd)$, then any signal $f\in L^2(\rd)$ can be decomposed as 
\[
	f=\sum_{\lambda\in\Lambda}\la f,\pi_\tau(\lambda)g\ra\pi_\tau(\lambda)\gamma_\tau,
\]
for a suitable $\gamma_\tau=\gamma_\tau(g,\tau)\in L^2(\rd)$, providing a different way to represent signals in terms of the \textit{atoms} $\pi_\tau(\lambda)g$. For many aspects, $\tau$-Wigner distributions represent signals better than the STFT, but they are only the tip of the iceberg of a wide variety of time-frequency representations, called \textit{metaplectic Wigner distributions}, introduced and studied in \cite{CGshiftinvertible,CG2023frames,CGR2022,CR2022p1,CR2022,Giacchi}. Among all the metaplectic Wigner distributions, the Wigner-decomposable representations are the immediate generalization of the STFT and $\tau$-Wigner distributions. In fact, they are defined, up to a chirp phase factor, in terms of the Fourier transform with respect to the second variable and a rescaling operator:
\[
	W_\cA(f,g)=\cF_2\mathfrak{T}_E(f\otimes\bar g), \qquad f,g\in L^2(\rd),
\]
where $E\in GL(2d,\bR)$, $\mathfrak{T}_E F=|\det(E)|^{1/2}F(E\cdot)$ and $\cA$ denotes a symplectic matrix which is related to $\cF_2$ and $\mathfrak{T}_E$, see Section \ref{sec:MAWD}. 

In this work, we derive the expression of the \textit{metaplectic atoms} associated to Wigner-decomposable distributions and showcase the related frames. 
Moreover, we characterize modulation spaces for Wigner-decomposable distributions, extending the work \cite{BCGT2020,CT2020}.

\textbf{Overview.} Section 2 contains definitions and preliminaries. Section 3 introduces the main protagonists of this theory: the metaplectic Wigner distributions and related atoms, showing their fundamental properties. Section 4 is the core of this study: (totally) Wigner decomposable distributions are defined  and the related atoms are computed explicitly. Also, the inverse of a Wigner decomposable atom is shown and relations among Wigner decomposable and totally Wigner decomposable atoms are highlighted. The last section is devoted to frame theory for (totally) Wigner decomposable atoms. In particular, Theorem \ref{main} shows the relation between these frames and the classical Gabor ones.  Finally, modulation and Wiener amalgam spaces  can be defined by means of Wigner decomposable distributions and related frames, cf. Theorems \ref{main2} and \ref{main3}.

\section{Preliminaries and Notation}\label{sec:preliminaries}
Throughout this work, $xy=x\cdot y$ denotes the standard inner product on $\rd$. If $0<p<\infty$ and $f:\rd\to\bC$ is measurable, $\norm{f}_p=(\int_{\rd}|f(x)|^pdx)^{1/p}$, whereas $\norm{f}_\infty:=\essupp_{x\in\rd}|f(x)|$. These are norms if $p\geq1$ and quasi-norms if $0<p<1$. $\cS(\rd)$ denotes the spaces of Schwartz functions, whereas $\cS'(\rd)$ denotes its topological dual, the space of tempered distributions. We denote with $\la f,g\ra=\int f(t)\overline{g(t)}dt$ the inner product on $L^2(\rd)$ (antilinear in the second component). We use the same notation to denote the duality pairing of $\cS'(\rd)\times\cS(\rd)$. Namely, if $f\in\cS'(\rd)$ and $\varphi\in\cS(\rd)$,
\[
	\la f,\varphi\ra :=f(\bar\varphi).
\] 
For $z=(x,\xi)\in\rdd$, the \textit{time-frequency shift} $\pi(z)$ is the operator
\[
	\pi(z)f(t)=\pi(x,\xi)f(t)=M_\xi T_x f(t)=e^{2\pi i\xi\cdot t}f(t-x), \qquad f\in L^2(\rd),
\]
where $M_\xi f(t)=e^{2\pi i\xi\cdot t}f(t)$ and $T_xf(t)=f(t-x)$ are the \textit{modulation} and the \textit{translation} operators, respectively.\\

If $f,g:\rd\to\bR$, $f\lesssim g$ means that there exists $C>0$ such that $f(t)\leq Cg(t)$ for all $t\in\bR^d$. If $f\lesssim g\lesssim f$, we write $f\asymp g$. We denote by $v$ a continuous, positive, even, submultiplicative weight function on $\rdd$, i.e., 
$ v(z_1+z_2)\leq v(z_1)v(z_2)$, for all $ z_1,z_2\in\rdd$. %Observe that since $v$ is even, positive and submultiplicative, it follows that $v(z)\geq1$ for all $z \in\rdd$. 
We say that $w\in \mathcal{M}_v(\rdd)$ if $w$ is a positive, continuous, even weight function  on $\rdd$  {\it
	$v$-moderate}:
$ w(z_1+z_2)\lesssim v(z_1)w(z_2)$  for all $z_1,z_2\in\rdd$. Fundamental examples are the polynomial weights
\begin{equation}\label{vs}
	v_s(z) =(1+|z|)^{s},\quad s\in\bR,\quad z\in\rdd.
\end{equation}
For $0<p,q\leq\infty$, $m\in\cM_v(\rdd)$ and $F=F(x,y):\rdd\to\bC$ measurable, we denote by
\[
	\norm{F}_{L^{p,q}_m}:=\norm{y\mapsto \norm{m(\cdot,y)F(\cdot,y)}_p }_q.
\]
If $p,q<\infty$,
\[
	\norm{F}_{L^{p,q}_m}=\left(\int_{\rd} \left(\int_{\rd} |F(x,y)|^pm(x,y)^p dx \right)^{q/p} dy\right)^{1/q}.
\]
(Obvious changes for $p=\infty$ or $q=\infty$). 
The mixed-norm Lebesgue space $L^{p,q}_m(\rdd)$ is the space of measurable functions $F:\rdd\to\bC$ such that $\norm{F}_{L^{p,q}_m}<\infty$.\\

If $f,g:\rdd\to\bC$, their tensor product is denoted by $f\otimes g(x,y)=f(x)g(y)$. If $f,g\in \cS'(\rd)$, their tensor product is the unique tempered distribution $f\otimes g\in\cS'(\rdd)$ characterized by its action on tensor products $\varphi\otimes\psi\in \cS(\rdd)$ by:
\[
	\la f\otimes g,\varphi\otimes \psi\ra =\la f,\varphi\ra\la g,\psi\ra.
\]

\subsection{Fourier transform}
Let $f\in\cS(\rd)$, the \textbf{Fourier transform} of $f$, denoted by $\hat f$, is the function defined as
\[
	\hat f(\xi)=\int_{\rd}f(x)e^{-2\pi i\xi\cdot x}dx, \qquad \xi\in\rd.
\]
The Fourier transform of $f\in\cS'(\rd)$ is defined by duality as
\[
	\la \hat f,\hat \varphi\ra = \la f,\varphi\ra, \qquad \varphi\in\cS(\rd).
\]
The Fourier transform operator will be denoted with $\cF$. It is topological isomorphism of $\cS(\rd)$ and $\cS'(\rd)$ as well as a unitary operator on $L^2(\rd)$. For $F=F(x,y)\in \cS(\rdd)$, we set
\[
	\cF_2 F(x,\xi):=\int_{\rd} F(x,y)e^{-2\pi i\xi\cdot y}dy
\]
the partial Fourier transform \textit{with respect to the second variables}. It is a topological isomorphism of $\cS(\rdd)$ to itself that extends to a topological isomorphism of $\cS'(\rdd)$. This extension is characterized by its action on tensor products $\varphi\otimes\psi\in \cS(\rdd)$ by:
\[
	\la \cF_2(f\otimes g),\varphi\otimes\psi\ra =\la f,\varphi\ra\la \hat g,\psi\ra=\la f,\varphi\ra\la g,\cF^{-1}\psi\ra=\la f\otimes g,\cF_2^{-1}(\varphi\otimes \psi)\ra.
\]
$\cF_2$ also extends to a unitary operator on $L^2(\rdd)$.

\subsection{Modulation spaces \cite{KB2020,Elena-book,F1,book,Galperin2004}}
Let $f\in \cS'(\rd)$ and fix $g\in \cS(\rd)\setminus\{0\}$. The \textit{short-time Fourier transform} of $f$ with respect to $g$ is the function defined in \eqref{STFT}.
It is possible to recover a signal $f\in\cS'(\rd)$ as an integral superposition of time-frequency shifts: for fixed $g,\gamma\in\cS(\rd)$ such that $\la g,\gamma\ra \neq 0$,
\begin{equation}\label{invSTFT}
	f = \frac{1}{\la \gamma,g \ra}\int_{\rdd}V_gf(x,\xi)\pi(x,\xi)\gamma dxd\xi,
\end{equation}
where the integral must be intended in the weak sense of vector-valued integration.

Other time-frequency representation that we will consider  are the (cross-)\textit{$\tau$-Wigner distributions},  defined in \eqref{tauWigner}.

The cases $\tau=0,1$ are known as \textit{Rihacek} and \textit{conjugate-Rihacek} distributions.

For $0<p,q\leq\infty$ and $m\in\cM_v(\rdd)$, the \textbf{modulation space} $M^{p,q}_m(\rd)$ is the space of tempered distributions $f\in\cS'(\rd)$ such that
\[
	\norm{f}_{M^{p,q}_m}=\norm{V_gf}_{L^{p,q}_m}<\infty.
\]
If $p=q$, we write $M^p_m(\rd)=M^{p,p}_m(\rd)$. $\norm{\cdot}_{M^{p,q}_m}$ are quasi-norms (norms if $p,q\geq1$) and different choices of $g$ give rise to equivalent (quasi-)norms. We recall the basic inclusion and duality properties of these spaces: if $0<p_1\leq p_2\leq\infty$, $0< q_1\leq q_2\leq\infty$ and $m_1,m_2\in\cM_{v_s}(\rdd)$ are such that  $m_2\lesssim m_1$, then
$
	\cS(\rd)\hookrightarrow M^{p_1,q_1}_{m_1}(\rd)\hookrightarrow M^{p_2,q_2}_{m_2}(\rd)\hookrightarrow \cS'(\rd).
$
Moreover, if $1\leq p,q<\infty$, $(M^{p,q}_m(\rd))'=M^{p',q'}_{1/m}(\rd)$. 

For fixed $g\in \cS(\rd)\setminus\{0\}$, $0<p,q\leq\infty$ and $m_1, m_2\in \cM_v(\rd)$, the \textbf{Wiener amalgam space} $W(\cF L^p_{m_1}, L^q_{m_2})(\rd)$ is defined in terms of the STFT as the space of tempered distributions $f\in\cS'(\rd)$ such that the (quasi-)norm
\[
	\norm{f}_{W(\cF L^p_{m_1},L^q_{m_2})}:=\left(\int_{\rd}\left(\int_{\rd}|V_gf(x,\xi)|^pm_1(\xi)^pd\xi\right)^{q/p}m_2(x)^qdx\right)^{1/q},
\]
is finite, with the obvious adjustments for $\max\{p,q\}=\infty$. Again, different choices of the window $g$ yield to equivalent (quasi-)norms. This definition highlights that $W(\cF L^p_{m_1},L^q_{m_2})(\rd)=\cF M^{p,q}_{m_1\otimes m_2}(\rd)$.

\subsection{Gabor frames}
Let $g\in L^2(\rd)$ and $\Lambda\subseteq\rdd$ be a discrete set. The \textbf{Gabor system} $\cG(g,\Lambda)$ is defined as the set of the time-frequency shifts of $g$ parametrized by $\Lambda$. Namely,
\[
	\cG(g,\Lambda)=\{\pi(\lambda)g\}_{\lambda\in\Lambda}.
\]
The Gabor system $\cG(g,\Lambda)$ defines a Gabor frame of $L^2(\rd)$ if there exist $A,B>0$ such that \eqref{GF} holds true.
Observe that this is equivalent to 
\[
	A\norm{f}_2^2\leq \sum_{\lambda\in\Lambda}|V_gf(\lambda)\ra|^2\leq B\norm{f}_2^2, \qquad f\in L^2(\rd).
\]

If $\cG(g,\Lambda)$ is a Gabor frame, then 
\[
	f=\sum_{\lambda\in \Lambda}\la f,\pi(\lambda)g\ra \pi(\lambda)\gamma,
\]
where $\gamma\in L^2(\rd)$ is the so-called \textit{dual window}, which depends on $g$ and $\Lambda$, and the series converges unconditionally in the $L^2(\rd)$-norm.

\subsection{Symplectic group and metaplectic operators}
For details we refer to \cite{Gos11}. Let $I_{d\times d}$ and $0_{d\times d}$ denote the identity matrix and the matrix with all zero entries, respectively. Let us denote with $J$ the standard symplectic matrix:
\[
	J=\begin{pmatrix}
		0_{d\times d} & I_{d\times d}\\
		-I_{d\times d} & 0_{d\times d}
	\end{pmatrix}.
\]
A matrix $S\in\bR^{2d\times 2d}$ is \textbf{symplectic} if $ S^T J S = J $. We denote by $Sp(d,\bR)$ the group of $2d\times2d$ symplectic matrices, which is generated by the symplectic matrices $J$, $\cD_E$ and $V_C$ ($E\in GL(d,\bR)$, $C\in\bR^{d\times d}$, $C$ symmetric), where
\begin{equation}\label{defDEVC}
	\cD_E=\begin{pmatrix} E^{-1} & 0_{d\times d}\\
	0_{d\times d} & E^T\end{pmatrix}, \qquad V_C=\begin{pmatrix} I_{d\times d} & 0_{d\times d} \\ C & I_{d\times d}\end{pmatrix}.
\end{equation}

For $z=(x,\xi)\in\rdd$ and $\tau\in\bR$, we denote with $\rho(z;\tau)=e^{2\pi i\tau}e^{-i\xi\cdot x}\pi(z)$. This is the Schr\"odinger representation of the Heisenberg group. A unitary operator $U:L^2(\rd)\to L^2(\rd)$ which  satisfies the intertwining relationship:
\[
	U^{-1}\rho(z;\tau) U = \rho(Sz;\tau), \qquad z\in\rdd, \  \tau\in\bR,
\]
for a matrix $S\in Sp(d,\bR)$, is called \textbf{metaplectic operator}. The symplectic matrix $S$ is the {projection} of $U$ onto $Sp(d,\bR)$. We write $S=\pi^{Mp}(U)$ and denote $U=\hat S$. Given $S\in Sp(d,\bR)$, the associated metaplectic operator $\hat S$ is unique, up to a sign. The group $\{\pm \hat S : S\in Sp(d,\bR)\}$ of metaplectic operators is denoted by $Mp(d,\bR)$.

For $C\in\bR^{d\times d}$, let us denote 
\begin{equation}\label{chirp}
\Phi_C(t):=e^{i\pi Ct\cdot t}. 
\end{equation}
In the sequel we showcase the main examples of such operators.
\begin{example} $(i)$ The operator $f\in L^2(\rd)\mapsto\Phi_C\cdot f$ is a metaplectic operator and its projection is the symplectic matrix $V_C$ defined in (\ref{defDEVC}).\\
$(ii)$ The normalized rescaling operator $\mathfrak{T}_E: f\in L^2(\rd)\mapsto |\det(E)|^{1/2}f(E\cdot)\in L^2(\rd)$ is a metaplectic operator and $\pi^{Mp}(\mathfrak{T}_E)=\cD_E$, defined as in (\ref{defDEVC}).\\
$(iii)$ The Fourier transform $\cF:f\in L^2(\rd)\mapsto \hat f\in L^2(\rd)$ is a metaplectic operator with $\pi^{Mp}(\cF)=J$.\\
%$(iv)$ The unitary operator $f\in L^2(\rd)\mapsto \cF^{-1}\Phi_{-C}\ast f$ is a metaplectic operator with projection given by the symplectic matrix $V_C^T$.\\
$(iv)$ The partial Fourier transform $\cF_2: f\in L^2(\rdd)\mapsto \cF_2 f\in L^2(\rdd)$ is a metaplectic operator and its projection onto the symplectic group is the symplectic matrix of $Sp(2d,\bR)$ whose block decomposition is
\begin{equation}\label{defAFT2}
	\cA_{FT2}=\begin{pmatrix}
		I_{d\times d}  & 0_{d\times d} & 0_{d\times d} & 0_{d\times d} \\
		0_{d\times d} & 0_{d\times d} & 0_{d\times d} & I_{d\times d} \\
		0_{d\times d} & 0_{d\times d} & I_{d\times d}  & 0_{d\times d} \\
		0_{d\times d} & -I_{d\times d} & 0_{d\times d} & 0_{d\times d}
	\end{pmatrix}.
\end{equation}
\end{example}

\section{Metaplectic Wigner distributions and Atoms}
\begin{definition}
	Let $\hat\cA\in Mp(2d,\bR)$. The \textbf{metaplectic Wigner distribution} associated to $\hat\cA$ is the time-frequency representation 
	\begin{equation}\label{defWA}
		W_\cA(f,g)=\hat\cA(f\otimes\bar g),\quad f,g\in L^2(\rd).
	\end{equation}
\end{definition}

\begin{example}
Examples of metaplectic Wigner distributions are the STFT and the $\tau$-Wigner distributions ($\tau\in\bR$). In fact, for every $f,g\in L^2(\rd)$,
\[
	V_gf(x,\xi)=\cF_2\mathfrak{T}_{ST}(f\otimes\bar g)(x,\xi),\quad \phas\in\rdd
\]
where $\mathfrak{T}_{ST}F(x,y)=F(y, y-x)$ and
\[
	W_\tau(f,g)(x,\xi)=\cF_2\mathfrak{T}_\tau (f\otimes \bar g)(x,\xi),\quad \phas\in\rdd
\]
where 
$
	\mathfrak{T}_\tau F(x,y)=F(x+\tau y,x-(1-\tau)y).
$
\end{example}

We recall the following continuity properties \cite{CR2022p1}.

\begin{proposition} Let $W_\cA$ be a metaplectic Wigner distribution. Then,\\
(i) $W_\cA:L^2(\rd)\times L^2(\rd)\to L^2(\rdd)$ is continuous and Moyal's identity holds:
\begin{equation}\label{moyal}
	\la W_\cA(f,g),W_\cA(\varphi,\psi)\ra =\la f,\varphi\ra \overline{\la g,\psi\ra}, \qquad f,g,\varphi,\psi\in L^2(\rd).
\end{equation}
(ii) $W_\cA:\cS(\rd)\times\cS(\rd)\to \cS(\rdd)$ is continuous;\\
(iii) $W_\cA:\cS'(\rd)\times\cS'(\rd)\to\cS'(\rdd)$ is continuous.
\end{proposition}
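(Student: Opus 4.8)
The plan is to prove the three continuity statements by reducing everything to known properties of metaplectic operators, exploiting that $W_\cA(f,g) = \hat\cA(f\otimes\bar g)$ is the composition of the tensor map $(f,g)\mapsto f\otimes\bar g$ with a single metaplectic operator $\hat\cA\in Mp(2d,\bR)$. The tensor map is bilinear and, crucially, continuous $L^2(\rd)\times L^2(\rd)\to L^2(\rdd)$ (with $\norm{f\otimes\bar g}_{L^2(\rdd)} = \norm{f}_2\norm{g}_2$), continuous $\cS(\rd)\times\cS(\rd)\to\cS(\rdd)$, and continuous $\cS'(\rd)\times\cS'(\rd)\to\cS'(\rdd)$ (the latter by the defining property of the tensor product of tempered distributions recalled in Section~\ref{sec:preliminaries}). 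So the whole proposition follows once we know that every $\hat\cA\in Mp(2d,\bR)$ acts continuously on each of the three spaces $L^2(\rdd)$, $\cS(\rdd)$, $\cS'(\rdd)$.

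For part (i), $\hat\cA$ is by definition a unitary operator on $L^2(\rdd)$, hence bounded, and bilinear continuity of $W_\cA$ on $L^2\times L^2$ is immediate: $\norm{W_\cA(f,g)}_2 = \norm{f\otimes\bar g}_2 = \norm{f}_2\norm{g}_2$. Moyal's identity then reduces, by unitarity of $\hat\cA$, to $\la f\otimes\bar g, \varphi\otimes\bar\psi\ra_{L^2(\rdd)} = \la f,\varphi\ra\,\overline{\la g,\psi\ra}$, which is the elementary Fubini computation $\int_{\rdd} f(s)\overline{g(t)}\,\overline{\varphi(s)}\psi(t)\,ds\,dt = \la f,\varphi\ra\,\overline{\la g,\psi\ra}$ (note the two conjugations on the $g$-side combine correctly). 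For parts (ii) and (iii), I would invoke the standard fact that $Sp(d,\bR)$ is generated by the matrices $J$, $\cD_E$, $V_C$, so that every metaplectic operator is a finite product of (scalar multiples of) the Fourier transform $\cF$, rescalings $\mathfrak{T}_E$, and multiplications by chirps $\Phi_C$, as listed in the Example in Section~\ref{sec:preliminaries}. Each of these three generators is manifestly a topological isomorphism of $\cS$ onto itself and of $\cS'$ onto itself (the chirp multiplier because $\Phi_C$ and its inverse are smooth with polynomially bounded derivatives of all orders; $\cF$ and $\mathfrak{T}_E$ are classical), and compositions of topological isomorphisms are topological isomorphisms. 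Applying this in dimension $2d$ gives continuity of $\hat\cA$ on $\cS(\rdd)$ and on $\cS'(\rdd)$, and composing with the (already noted) continuity of the tensor map in those categories yields (ii) and (iii).

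The main obstacle, such as it is, is not in any single estimate but in correctly assembling the generator decomposition and checking the chirp-multiplication generator acts continuously on $\cS$ and $\cS'$ — the Fourier transform and dilation are textbook, but one should verify that multiplication by $e^{i\pi Ct\cdot t}$ preserves the Schwartz seminorm structure (Leibniz rule plus the observation that derivatives of $\Phi_C$ are polynomials times $\Phi_C$) and then dualize. Alternatively, and perhaps more cleanly, I would cite that this continuity of metaplectic operators on $\cS$ and $\cS'$ is a known structural fact (it is, e.g., part of the standard theory and is used throughout \cite{CR2022p1}), in which case the proof of the proposition collapses to the two bullet points: tensor map continuous in each category, metaplectic operator continuous in each category, compose. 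Since the proposition is explicitly labelled as recalled from \cite{CR2022p1}, I would keep the write-up to this short reduction and the one-line Fubini verification of Moyal's identity.
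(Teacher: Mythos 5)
Your argument is correct, and there is in fact nothing in the paper to compare it against: the proposition is stated as a recollection from \cite{CR2022p1} with no proof given. Your reduction --- the tensor map $(f,g)\mapsto f\otimes\bar g$ is continuous in each of the three categories, and the single metaplectic operator $\hat\cA$ acts continuously on $L^2(\rdd)$ (by unitarity, which is part of the definition), and on $\cS(\rdd)$ and $\cS'(\rdd)$ by factoring $\hat\cA$ into a finite product of the generators $\cF$, $\mathfrak{T}_E$ and multiplication by $\Phi_C$, each a topological isomorphism of $\cS$ and of $\cS'$ --- is exactly the standard route, and it is the one the cited reference takes. Your verification of Moyal's identity is also correct: unitarity reduces it to $\la f\otimes\bar g,\varphi\otimes\bar\psi\ra_{L^2(\rdd)}=\la f,\varphi\ra\,\overline{\la g,\psi\ra}$, and the two conjugations on the $g$-side do combine as you say.

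One small point of precision if you write this out: in statement (iii) the word ``continuous'' for the bilinear map on $\cS'(\rd)\times\cS'(\rd)$ should be read as separate continuity (equivalently hypocontinuity) of the tensor map, which is the standard meaning for the tensor product of tempered distributions recalled in Section \ref{sec:preliminaries}; genuine joint continuity of bilinear maps on distribution spaces is a more delicate matter and is not what is claimed. With that reading, postcomposition with the continuous linear map $\hat\cA$ on $\cS'(\rdd)$ preserves the property and (iii) follows. The remaining checks you flag --- the Leibniz-rule argument that multiplication by $\Phi_C$ preserves the Schwartz seminorms, and the dualization to $\cS'$ --- are routine and exactly as you describe, so citing them as known structural facts about $Mp(2d,\bR)$, as the paper implicitly does, is entirely appropriate.
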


An equivalent of the inversion formula (\ref{invSTFT}) for the STFT holds for general metaplectic Wigner distributions, as it was proved in \cite{CG2023frames}, as long as time-frequency shifts are replaced by \textit{metaplectic atoms}.

\begin{definition}
	Let $W_\cA$ be a metaplectic Wigner distribution and $z\in\rdd$. The \textbf{metaplectic atom} $\pi_\cA(z):\cS(\rd)\to \cS'(\rd)$ is the operator defined for all $f\in\cS(\rd)$ by its action on $\cS(\rd)$ as:
	\[
		\la \pi_\cA(z)f,\varphi\ra = \overline{W_\cA(\varphi,f)(z)}, \qquad \varphi\in\cS(\rd).
	\]
\end{definition}

Their main properties were investigated in \cite{CG2023frames}; in particular, we recall the issues below.

For every metaplectic Wigner distribution $W_\cA$ and $z\in\rdd$, $\pi_\cA(z):\cS(\rd)\to\cS'(\rd)$ is bounded.

\begin{theorem}
	Let $W_\cA$ be a metaplectic Wigner distribution. Then, for every $f\in\cS'(\rd)$ and every $g,\gamma\in\cS(\rd)$ such that $\la g,\gamma\ra\neq0$, we have:
	\[
		f = \frac{1}{\la \gamma,g\ra}\int_{\rdd}W_\cA(f,g)(z)\pi_\cA(z)\gamma dz,
	\]
	when the integral must be intended in the weak sense of vector-valued integration.
\end{theorem}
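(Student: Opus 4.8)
The plan is to reduce the inversion formula for $W_\cA$ to the classical STFT inversion formula~\eqref{invSTFT} by exploiting the fact that $\hat\cA$ is a unitary operator and that, by definition~\eqref{defWA}, $W_\cA(f,g) = \hat\cA(f\otimes\bar g)$. First I would test the claimed identity weakly: fix $\psi\in\cS(\rd)$ and compute $\langle \int_{\rdd} W_\cA(f,g)(z)\,\pi_\cA(z)\gamma\,dz,\,\psi\rangle$, which by the definition of the vector-valued integral and of the metaplectic atom equals $\int_{\rdd} W_\cA(f,g)(z)\,\overline{W_\cA(\psi,\gamma)(z)}\,dz = \langle W_\cA(f,g),\,W_\cA(\psi,\gamma)\rangle$. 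Now Moyal's identity~\eqref{moyal} for $W_\cA$ gives $\langle W_\cA(f,g),W_\cA(\psi,\gamma)\rangle = \langle f,\psi\rangle\,\overline{\langle g,\gamma\rangle} = \overline{\langle g,\gamma\rangle}\,\langle f,\psi\rangle$. Dividing by $\langle\gamma,g\rangle = \overline{\langle g,\gamma\rangle}$ yields exactly $\langle f,\psi\rangle$, which is the desired identity tested against $\psi$. Since $\psi\in\cS(\rd)$ is arbitrary, this proves the formula for $f\in L^2(\rd)$.

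To reach general $f\in\cS'(\rd)$, I would run a density/duality argument: both sides of the identity are continuous in $f$ with respect to the relevant topologies — the left side trivially, the right side because $W_\cA:\cS'(\rd)\times\cS(\rd)\to\cS'(\rdd)$ is continuous (part (iii) of the Proposition), because $\pi_\cA(z):\cS(\rd)\to\cS'(\rd)$ is bounded, and because for $g,\gamma\in\cS(\rd)$ the map $z\mapsto W_\cA(f,g)(z)\pi_\cA(z)\gamma$ is weakly measurable with the appropriate decay (here one uses $W_\cA(f,g)\in\cS'(\rdd)$ paired against the Schwartz-class orbit $z\mapsto W_\cA(\psi,\gamma)(z)$, which lies in $\cS(\rdd)$ by part (ii)). The cleanest route is to keep everything weak: for $\psi\in\cS(\rd)$, the pairing $\langle W_\cA(f,g),W_\cA(\psi,\gamma)\rangle$ makes sense as an $\cS'$–$\cS$ duality, and Moyal's identity extended by continuity from $L^2$ to the $\cS'\times\cS$ setting — which follows from part (iii) together with the sesquilinearity and the fact that $\cS$ is dense in $\cS'$ — again collapses it to $\overline{\langle g,\gamma\rangle}\langle f,\psi\rangle$.

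The main obstacle is the justification of the vector-valued (weak) integral: one must check that $z\mapsto W_\cA(f,g)(z)$ is a tempered distribution on $\rdd$ (true, by part (iii) of the Proposition), that the orbit map $z\mapsto \pi_\cA(z)\gamma$ sends $\rdd$ into $\cS'(\rd)$ in a way that pairs measurably and with controlled growth against Schwartz functions, and that Fubini-type interchange of the $z$-integral with the $\psi$-pairing is legitimate. This is precisely the content of the "weak sense of vector-valued integration" in the statement, and I would handle it by reducing, via $\langle \pi_\cA(z)\gamma,\psi\rangle = \overline{W_\cA(\psi,\gamma)(z)}$, to the scalar integral $\int_{\rdd} W_\cA(f,g)(z)\overline{W_\cA(\psi,\gamma)(z)}\,dz$, which converges absolutely because $W_\cA(\psi,\gamma)\in\cS(\rdd)$ and $W_\cA(f,g)\in\cS'(\rdd)$ — or, when $f\in L^2$, because both factors are in $L^2(\rdd)$ by part (i). Everything else is the two-line computation above.
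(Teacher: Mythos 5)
The paper itself does not prove this theorem: it is quoted from the reference \cite{CG2023frames}, so there is no in-paper argument to compare against. Your proof is the standard (and almost certainly the intended) one, and it is correct: dualizing against $\psi\in\cS(\rd)$ and using the defining relation $\la \pi_\cA(z)\gamma,\psi\ra=\overline{W_\cA(\psi,\gamma)(z)}$ turns the weak integral into $\la W_\cA(f,g),W_\cA(\psi,\gamma)\ra$, which Moyal's identity collapses to $\la f,\psi\ra\,\overline{\la g,\gamma\ra}=\la\gamma,g\ra\,\la f,\psi\ra$. The one point genuinely requiring care --- which you correctly identify and handle --- is that Moyal's identity is stated only on $L^2$, while the theorem allows $f\in\cS'(\rd)$, where $W_\cA(f,g)$ need not be a pointwise-defined function; the pairing must be read as an $\cS'(\rdd)$--$\cS(\rdd)$ duality (legitimate since $W_\cA(\psi,\gamma)\in\cS(\rdd)$ by continuity of $W_\cA$ on $\cS\times\cS$), and the identity extended from $L^2$ by the weak-$*$ density of $\cS$ in $\cS'$ together with the continuity of $W_\cA$ on $\cS'\times\cS'$.
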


Let $\hat\cA\in Mp(2d,\bR)$ have projection $\cA=\pi^{Mp}(\hat\cA)\in Sp(2d,\bR)$. We write $\cA$ in terms of $d\times d$ blocks as
\begin{equation}\label{blockDecA}
	\cA=\begin{pmatrix}
		A_{11} & A_{12} & A_{13} & A_{14}\\
		A_{21} & A_{22} & A_{23} & A_{24}\\
		A_{31} & A_{32} & A_{33} & A_{34}\\
		A_{41} & A_{42} & A_{43} & A_{44}
	\end{pmatrix}.
\end{equation}
We call $E_\cA$  the submatrix:
\begin{equation}
	E_\cA = \begin{pmatrix}
		A_{11} & A_{13}\\
		A_{21} & A_{23}
	\end{pmatrix}
\end{equation}
and observe that, if $W_\cA$ is the metaplectic Wigner distribution associated to $\hat\cA$,
\[
	|W_\cA(\pi(w)f,g)|=|W_\cA(f,g)(\cdot-E_\cA w)|,
\] 
 for every $f,g\in L^2(\rd)$ and every $w\in\rdd$.
\begin{definition}
	Under the notation above, $W_\cA$ is \textbf{shift-invertible} if $E_\cA\in GL(2d,\bR)$.
\end{definition}

Shift-invertible Wigner distributions are fundamental, as they characterize modulation spaces. Whence, they can be used to measure the local time-frequency content of signals:

\begin{theorem}\label{thmSI}
	Let $W_\cA$ be a shift-invertible metaplectic Wigner distribution. Let $0<p,q\leq\infty$ and $m\in\cM_v(\rdd)$ be such that $m\circ E_\cA^{-1}\asymp m$. Let $g\in\cS(\rd)\setminus\{0\}$. Then,\\
	(i) if $A_{21}=0_{d\times d}$, i.e., if $E_\cA$ is upper-triangular, then $\norm{f}_{M^{p,q}_m}\asymp \norm{W_\cA(f,g)}_{L^{p,q}_m}$ for all $f\in M^{p,q}_m(\rd)$. If $1\leq p,q\leq\infty$, $g$ can be taken in $M^1_v(\rd)$.\\
	(ii) if $0<p=q\leq\infty$, then $\norm{f}_{M^p_m}\asymp\norm{W_\cA(f,g)}_{L^p_m}$ for every $f\in M^p_m(\rd)$.
\end{theorem}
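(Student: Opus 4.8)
The plan is to reduce the statement to the classical characterization of modulation spaces by the short-time Fourier transform, the bridge being the structure of the metaplectic atoms of a shift-invertible $W_\cA$. The first step is to record the identity
\[
	|W_\cA(f,g)(z)|=|c|\,\bigl|V_{g_0}f(E_\cA^{-1}z)\bigr|,\qquad f\in\cS'(\rd),\ z\in\rdd,
\]
where $c\neq0$ is a constant and $g_0=\hat S g$ for a fixed $\hat S\in Mp(d,\bR)$ depending only on $\cA$; here a unimodular chirp phase $\Phi_C(z)$ has been absorbed into the absolute value. This is a consequence of the explicit computation of $\pi_\cA(z)$ in \cite{CG2023frames,CGshiftinvertible} (recall $W_\cA(f,g)(z)=\langle f,\pi_\cA(z)g\rangle$): shift-invertibility is precisely what forces the time-frequency-shift part of $\pi_\cA(z)$ to be the invertible linear map $z\mapsto E_\cA^{-1}z$, which is also consistent with the covariance relation $|W_\cA(\pi(w)f,g)|=|W_\cA(f,g)(\cdot-E_\cA w)|$. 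Since $\hat S$ is unitary and maps $\cS(\rd)$ onto itself, $g_0\in\cS(\rd)\setminus\{0\}$; if moreover $g\in M^1_v(\rd)$ then $g_0\in M^1_v(\rd)$, because metaplectic operators are bounded on $M^1_v(\rd)$. By density, the identity passes from $L^2(\rd)$ to $\cS'(\rd)$.

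Granting the identity, (i) follows by a change of variables in the mixed-norm integral. Write $z=(z_1,z_2)\in\rd\times\rd$. The hypothesis $A_{21}=0_{d\times d}$ makes $E_\cA$, hence $E_\cA^{-1}$, block upper triangular, so the substitution $y=E_\cA^{-1}z$ has $y_2$ depending on $z_2$ alone; performing it first in the inner integral over $z_1$ (with $z_2$ fixed) and then in the outer integral over $z_2$ gives
\[
	\norm{W_\cA(f,g)}_{L^{p,q}_m}\asymp\norm{V_{g_0}f}_{L^{p,q}_{m\circ E_\cA}}.
\]
Now $m\circ E_\cA^{-1}\asymp m$ is equivalent to $m\circ E_\cA\asymp m$ (rename the argument), so $\norm{V_{g_0}f}_{L^{p,q}_{m\circ E_\cA}}\asymp\norm{V_{g_0}f}_{L^{p,q}_m}$, which in turn is $\asymp\norm{f}_{M^{p,q}_m}$ by window-independence of the modulation-space (quasi-)norm with the Schwartz window $g_0$ --- in the quasi-Banach range $0<p,q<1$ this is the Galperin--Samarah form \cite{Galperin2004}, and for $1\leq p,q\leq\infty$ one may instead take $g\in M^1_v(\rd)$, whence $g_0\in M^1_v(\rd)$. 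Chaining the equivalences, which hold for every $f\in\cS'(\rd)$, yields $\norm{W_\cA(f,g)}_{L^{p,q}_m}\asymp\norm{f}_{M^{p,q}_m}$, and in particular one side is finite iff the other is. For (ii) the argument is the same, except that with $p=q$ the space $L^p_m$ is carried into an equivalent space by the \emph{full} linear substitution $y=E_\cA^{-1}z$ (the Jacobian is a constant and the weight becomes $m\circ E_\cA\asymp m$), so no triangularity of $E_\cA$ is needed; one obtains $\norm{W_\cA(f,g)}_{L^p_m}\asymp\norm{V_{g_0}f}_{L^p_{m\circ E_\cA}}\asymp\norm{f}_{M^p_m}$.

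I expect the only genuine obstacle to be the justification of the structural identity, in particular checking that $g\mapsto g_0$ is an honest metaplectic operator independent of $z$ (so that $V_{g_0}f$ is a true STFT with a fixed window), that the leftover phase is a unimodular chirp, and that shift-invertibility pins the linear part of $\pi_\cA(z)$ down to $E_\cA^{-1}$; this is exactly where the analysis of metaplectic atoms in \cite{CGshiftinvertible,CG2023frames} does the work. Beyond that, the proof is bookkeeping: a block-triangular change of variables in a mixed-norm integral for (i), an arbitrary linear change of variables in an $L^p$ integral for (ii), and the moderateness hypothesis $m\circ E_\cA^{-1}\asymp m$ to keep the weight under control; the quasi-Banach case and the $M^1_v$-window refinement only add the need to quote the appropriate versions of window-independence and of boundedness of $\hat S$ on $M^1_v(\rd)$.
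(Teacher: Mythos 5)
The paper itself gives no proof of Theorem \ref{thmSI} (it is recalled from \cite{CGshiftinvertible,CR2022}), but your argument is exactly the one used there and mirrored in this paper's own proofs of Proposition \ref{piAdec} and Theorem \ref{main2}: factor the atom as $\pi_\cA(z)=(\text{phase})\,\pi(E_\cA^{-1}z)\hat S$ with $\hat S$ a fixed metaplectic operator --- the single load-bearing step, which you correctly identify and attribute, since shift-invertibility is what makes $\pi_\cA(0)$ an intertwiner of equivalent projective representations and hence metaplectic --- and then change variables in the mixed norm, block upper-triangularity of $E_\cA$ being precisely what lets the substitution respect the $L^{p,q}$ integration order, while $p=q$ needs no such restriction. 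The proposal is correct and takes essentially the same approach.
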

It was showed in \cite{CGshiftinvertible} that if $W_\cA$ is not shift-invertible or $E_\cA$ is not upper-triangular, Theorem \ref{thmSI} fails in general.

\section{Wigner-decomposable atoms}\label{sec:MAWD}
Among all shift-invertible Wigner distributions, the Wigner-decomposable ones, introduced in \cite{CGR2022}, provide a direct generalization of the STFT, as they can be written as $\cF_2\mathfrak{T}_E$ for some $E\in GL(2d,\bR)$, up to a chirp function. In this section, we compute metaplectic atoms associated to Wigner-decomposable metaplectic Wigner distributions.

\begin{definition}
	We say that a matrix $\mathcal{A}\in Sp(d,\mathbb{R})$ is \textbf{Wigner-decomposable} (equivalently, $W_\cA$ is Wigner-decomposable) if $\mathcal{A}=V_C\mathcal{A}_{FT2}\mathcal{D}_E$,  with $\mathcal{D}_E$ and $V_C$ defined in \eqref{defDEVC}, for some $C\in\rd$ symmetric and $E\in GL(d,\mathbb{R})$.  In particular, if $C=0_{d\times d}$ then $V_C=I_{2d\times 2d}$ and we call $\mathcal{A}_{FT2}\mathcal{D}_E$ (or, equivalently, $W_{\mathcal{A}_{FT2}\mathcal{D}_E}$)  \textbf{totally Wigner-decomposable}. 
\end{definition}

Metaplectic atoms of Wigner-decomposable distributions can be expressed in terms of the matrices $C$ and $E$. From now on, we assume that the matrix $E$ enjoys the block decomposition
\begin{equation}\label{matrix-L}
E=\begin{pmatrix}
	E_{11} & E_{12}\\
	E_{21} & E_{22}
\end{pmatrix}.
\end{equation}
In \cite{CGR2022}, the authors prove that a Wigner-decomposable distribution $W_\cA$ is shift-invertible if and only if $E$ is right-regular \footnote{\begin{definition}
		The $2d$-block matrix $E$ in \eqref{matrix-L} is called left-regular (resp. right-regular) if the submatrices $E_{11},E_{21}\in\mathbb{R}^{d\times d}$
		(resp. $E_{12},E_{22}\in\mathbb{R}^{d\times d}$) are invertible. 
\end{definition}}.

\begin{proposition}\label{piAdec}
Assume that	$\mathcal{A}\in Sp(2d,\mathbb{R})$  is totally Wigner-decomposable, that is $\cA=\mathcal{A}_{FT2}\mathcal{D}_E$, with  $E$ in \eqref{matrix-L} right-regular. Then the metapletic atom $\pi_{\cA}\phas$ can be explicitly computed as
\begin{equation}\label{atom-W-dec}
	\pi_{\cA}\phas=\alpha_Ee^{-2\pi i E_{11}E_{12}^{-T}\xi \cdot x}  M_{E_{22}^{-T}\xi}T_{E_{12}E_{22}^{-1}(E_{11}-E_{12}E_{22}^{-1}E_{21})x}\mathfrak{T}_{E_{22}E_{12}^{-1}},
\end{equation}
with $\alpha_E=|\det E|^{1/2}\,|\det(E_{22}E_{12})|^{-1/2}$.
\end{proposition}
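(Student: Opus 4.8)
The plan is to compute $\pi_\cA(z)$ directly from its defining relation $\langle \pi_\cA(z)f,\varphi\rangle = \overline{W_\cA(\varphi,f)(z)}$, using the fact that $W_\cA(\varphi,f) = \cF_2 \mathfrak{T}_E(\varphi\otimes\bar f)$ when $\cA = \cA_{FT2}\cD_E$ is totally Wigner-decomposable. First I would write out $W_\cA(\varphi,f)(x,\xi)$ explicitly as an integral: since $\mathfrak{T}_E F(u,v) = |\det E|^{1/2} F(E(u,v))$ and $E$ acts by the block decomposition \eqref{matrix-L}, one gets
\[
	W_\cA(\varphi,f)(x,\xi) = |\det E|^{1/2}\int_{\rd} \varphi(E_{11}x + E_{12}y)\,\overline{f(E_{21}x+E_{22}y)}\, e^{-2\pi i \xi\cdot y}\,dy.
\]
Then I would substitute $t = E_{21}x + E_{22}y$, which is legitimate because $E_{22}$ is invertible (right-regularity), so $y = E_{22}^{-1}(t - E_{21}x)$ and $dy = |\det E_{22}|^{-1}\,dt$. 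This turns the integral into $\int \varphi(\,\cdot\,)\,\overline{f(t)}\, e^{-2\pi i\xi\cdot E_{22}^{-1}(t-E_{21}x)}\,dt$, where the argument of $\varphi$ becomes $E_{11}x + E_{12}E_{22}^{-1}(t-E_{21}x) = (E_{11}-E_{12}E_{22}^{-1}E_{21})x + E_{12}E_{22}^{-1}t$. Taking complex conjugates and reading off the result as $\langle \pi_\cA(z)f,\varphi\rangle = \int (\pi_\cA(z)f)(t)\,\overline{\varphi(t)}\,dt$ identifies $\pi_\cA(z)f(t)$ as $\overline{\text{(prefactor)}}$ times $f(t)$ against the conjugated $\varphi$-kernel — so I would carefully track which factors carry conjugation.

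The heart of the matter is then bookkeeping: I must express the operator $t\mapsto$ (phase in $t$) $\cdot f(A\,t + b)$, where $A = E_{12}E_{22}^{-1}$ acting inside $\varphi$ corresponds to $A^{-1} = E_{22}E_{12}^{-1}$ acting on $f$ after dualizing, as a composition $\alpha_E\, e^{-2\pi i E_{11}E_{12}^{-T}\xi\cdot x}\, M_{E_{22}^{-T}\xi}\, T_{E_{12}E_{22}^{-1}(E_{11}-E_{12}E_{22}^{-1}E_{21})x}\, \mathfrak{T}_{E_{22}E_{12}^{-1}}$. Here I would use the identities for how $T_x$, $M_\xi$, $\mathfrak{T}_E$ intertwine: $\mathfrak{T}_B T_a = T_{B^{-1}a}\mathfrak{T}_B$ and $\mathfrak{T}_B M_\eta = M_{B^T\eta}\mathfrak{T}_B$, and that $M_\xi T_x = e^{2\pi i\xi\cdot x}T_x M_\xi$. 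The linear-frequency term $e^{-2\pi i\xi\cdot E_{22}^{-1}t}$ will produce a modulation by $E_{22}^{-T}\xi$ after conjugation, the translation emerges from the shift $A^{-1}\cdot(\text{affine term in }x)$, and the leftover $x,\xi$ cross term collapses to the chirp $e^{-2\pi i E_{11}E_{12}^{-T}\xi\cdot x}$; the constant $\alpha_E = |\det E|^{1/2}|\det(E_{22}E_{12})|^{-1/2}$ comes from the Jacobian $|\det E_{22}|^{-1}$ of the substitution combined with the normalization $|\det(E_{22}E_{12}^{-1})|^{1/2}$ built into $\mathfrak{T}_{E_{22}E_{12}^{-1}}$.

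The main obstacle I expect is precisely this sign/conjugation and matrix-transpose bookkeeping — getting $E_{12}^{-T}$ versus $E_{22}^{-T}$ in the right places and ensuring the chirp exponent is exactly $E_{11}E_{12}^{-T}\xi\cdot x$ and not some symmetrized variant. A useful consistency check I would run at the end: since $\cA$ is symplectic, $E_{11}E_{12}^{-T}$ (equivalently $E_{12}^{-1}E_{11}$ up to transpose) should be symmetric, which is exactly what is needed for $e^{-2\pi i E_{11}E_{12}^{-T}\xi\cdot x}$ to be unambiguous and for $V_C$-type normalizations to match; and specializing to $E = E_{ST}$ should recover $\pi_\cA(z) = \pi(z)$, the ordinary time-frequency shift, which pins down all constants and signs. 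Right-regularity of $E$ (invertibility of $E_{12}$ and $E_{22}$) is used exactly twice — once for the change of variables and once to make $\mathfrak{T}_{E_{22}E_{12}^{-1}}$ and the translation argument well-defined — so the hypothesis is not merely convenient but necessary for the formula to make sense.
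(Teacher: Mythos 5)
Your strategy coincides with the paper's: write $W_\cA$ as the explicit integral coming from $\cF_2\mathfrak{T}_E$, perform one linear change of variables so that the argument of the function on which $\pi_\cA(z)$ acts becomes the integration variable, and read the operator off the resulting pairing. The differences are cosmetic. The paper computes $W_\cA(f,g)(z)=\la f,\pi_\cA(z)g\ra$ directly and substitutes in the first slot, $s=E_{11}x+E_{12}t$ (using invertibility of $E_{12}$), so no further dualization is needed; you compute $W_\cA(\varphi,f)$, substitute in the second slot, $t=E_{21}x+E_{22}y$ (using invertibility of $E_{22}$), and must then transfer the dilation and phase from the argument of $\varphi$ back onto $f$. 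Both routes are legitimate under right-regularity, and your intertwining identities for $T_x$, $M_\xi$, $\mathfrak{T}_B$ are correct.

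Two points in your bookkeeping need attention. First, the prediction that the term $e^{-2\pi i\xi\cdot E_{22}^{-1}t}$ ``produces a modulation by $E_{22}^{-T}\xi$'' does not survive your own dualization step: once the dilation $E_{12}E_{22}^{-1}$ is moved off the argument of $\varphi$, the phase $e^{2\pi i E_{22}^{-T}\xi\cdot t}$ is evaluated at $t=E_{22}E_{12}^{-1}\bigl(u-(E_{11}-E_{12}E_{22}^{-1}E_{21})x\bigr)$ and the modulation parameter becomes $E_{12}^{-T}\xi$. Carrying either route out carefully yields $\pi_\cA(x,\xi)=\alpha_E\,e^{-2\pi i E_{11}^{T}E_{12}^{-T}\xi\cdot x}\,M_{E_{12}^{-T}\xi}\,T_{(E_{11}-E_{12}E_{22}^{-1}E_{21})x}\,\mathfrak{T}_{E_{22}E_{12}^{-1}}$, which differs from \eqref{atom-W-dec} in the modulation parameter, in the translation parameter (no extra factor $E_{12}E_{22}^{-1}$), and in a transpose on $E_{11}$ in the chirp; this corrected version is the one consistent with the block form of $E_\cA$ recorded in the Remark following the proposition and with the $\tau$-Wigner case $E_{11}=E_{21}=I$, $E_{12}=\tau I$, $E_{22}=-(1-\tau)I$, where the modulation must be $M_{\xi/\tau}$ and the translation $T_{x/(1-\tau)}$. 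So do not steer the computation toward reproducing \eqref{atom-W-dec} verbatim; the honest computation is the proof. Second, your proposed consistency check ``since $\cA$ is symplectic, $E_{11}E_{12}^{-T}$ should be symmetric'' is vacuous: $\cD_E\in Sp(2d,\bR)$ for every $E\in GL(2d,\bR)$, so symplecticity of $\cA=\cA_{FT2}\cD_E$ imposes no relation among the blocks of $E$ (nor is any needed, since $M\xi\cdot x$ is well defined for arbitrary $M$). The reliable sanity checks are the STFT and $\tau$-Wigner specializations, which you also mention and which you should actually run.
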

\begin{proof}
	By definition, $W_\cA(f,g)\phas=\la f,\pi_\cA\phas g\ra$, for every $f,g\in\cS(\rd)$. In our case, $$W_\cA(f,g)\phas=\cF_2 \mathfrak{T}_{E}(f\otimes\bar{g})\phas=\sqrt{|\det E|}\cF_2(f\otimes\bar{g})(E_{11}x+E_{12}\xi,E_{21}x+E_{22}\xi),$$
	with $E$ in \eqref{matrix-L} having the sub-blocks $E_{12}, E_{22}$ invertible. Using the change of variables $E_{11}x+E_{12}t=s$ so that $dt=|\det E_{12}|^{-1} ds$, and $t=E^{-1}_{12}s-E_{12}^{-1}E_{11}x$, 
	\begin{align*}
		W_\cA(f,g)\phas&=\sqrt{|\det E|}|\det E_{12}|^{-1}\intrd e^{-2\pi i (\xi\cdot E_{12}^{-1}s-\xi\cdot E_{12}^{-1}E_{11}x)}f(s)\\
			&\qquad\qquad\qquad\qquad\qquad\times\,\,\overline{g(E_{21}x+E_{22}E_{12}^{-1}s-E_{22}E_{12}^{-1}E_{11}x)}\,ds\\
		&=\sqrt{|\det E|}|\det E_{12}|^{-1}e^{2\pi i \xi\cdot E_{12}^{-1}E_{11}x}\intrd e^{-2\pi i \xi\cdot E_{12}^{-1}s}f(s)\\
		&\qquad\qquad\qquad\qquad\qquad\times\,\,\overline{g(E_{22}E_{12}^{-1}[s-E_{12}E_{22}^{-1}(E_{11}-E_{12}E_{22}^{-1}E_{21})x])}\,ds\\
		&=\sqrt{|\det E|}|\det E_{12}|^{-1/2}|\det E_{22}|^{-1/2}e^{2\pi i E_{11}E_{12}^{-T}\xi\cdot x}\\
		&\qquad\qquad\qquad\qquad\qquad\times\,\,\la f, M_{E_{22}^{-T}\xi}T_{E_{12}E_{22}^{-1}(E_{11}-E_{12}E_{22}^{-1}E_{21})x}\mathfrak{T}_{E_{22}E_{12}^{-1}}g\ra,
	\end{align*}
as desired.
\end{proof}
\begin{remark}
	Let $W_\cA$ be a Wigner-decomposable distribution with associated symplectic matrix $\cA=V_C\cA_{FT2}\cD_E$, $C\in\bR^{2d\times2d}$ symmetric, $E\in GL(2d,\bR)$ invertible and $\cA_{FT2}$ defined as in (\ref{defAFT2}). The requirement of $E$ to be right-regular is equivalent to $\cA$ being shift-invertible, as shown in \cite[Corollary 4.12]{CGR2022}. Moreover, it follows by \cite[Remark 4.7]{CGR2022} that if $E$ has block decomposition (\ref{matrix-L}), then the matrix $E_\cA$ associated to $\cA$ has block decomposition:
	\[
		E_\cA=\begin{pmatrix}
			(E_{11}-E_{12}E_{22}^{-1}E_{21})^{-1} & 0_{d\times d}\\
			0_{d\times d} & E_{12}^T
		\end{pmatrix}.
	\]
\end{remark}
\begin{corollary}
	Under the assumptions of Proposition \ref{piAdec}, the inverse $\pi^{-1}_\cA$ can be explicitly computed as
	\begin{equation}\label{pi-A-dec-inv}
			\pi_{\cA}^{-1}\phas=\tilde{\alpha}_Ee^{2\pi i E_{11}E_{12}^{-T}\xi \cdot x} \mathfrak{T}_{E_{12}E_{22}^{-1}} T_{-E_{12}E_{22}^{-1}(E_{11}-E_{12}E_{22}^{-1}E_{21})x}M_{-E_{22}^{-T}\xi},\quad x,\xi\in\rd,
	\end{equation}
with $\tilde{\alpha}_E=|(\det E)(\det E_{12}^{-1})(\det E_{22}^{-1})|^{-1/2}$.
\end{corollary}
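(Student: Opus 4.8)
The plan is to invert the operator in \eqref{atom-W-dec} directly, treating $\pi_\cA\phas$ as the composition of four elementary unitary operators: the scalar/chirp factor $\alpha_E e^{-2\pi i E_{11}E_{12}^{-T}\xi\cdot x}$ (which depends on $\phas$ but not on the function variable, hence is a nonzero constant multiple of the identity for each fixed $\phas$), a modulation $M_{E_{22}^{-T}\xi}$, a translation $T_{E_{12}E_{22}^{-1}(E_{11}-E_{12}E_{22}^{-1}E_{21})x}$, and a rescaling $\mathfrak{T}_{E_{22}E_{12}^{-1}}$. Writing $\pi_\cA\phas=c\,M_\mu T_\nu \mathfrak{T}_B$ with $c=\alpha_E e^{-2\pi i E_{11}E_{12}^{-T}\xi\cdot x}$, $\mu=E_{22}^{-T}\xi$, $\nu=E_{12}E_{22}^{-1}(E_{11}-E_{12}E_{22}^{-1}E_{21})x$ and $B=E_{22}E_{12}^{-1}$, the inverse is simply $\pi_\cA^{-1}\phas=c^{-1}\mathfrak{T}_B^{-1}T_\nu^{-1}M_\mu^{-1}=c^{-1}\mathfrak{T}_{B^{-1}}T_{-\nu}M_{-\mu}$, using $\mathfrak{T}_B^{-1}=\mathfrak{T}_{B^{-1}}$, $T_\nu^{-1}=T_{-\nu}$, $M_\mu^{-1}=M_{-\mu}$.

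The remaining work is bookkeeping: substitute $B^{-1}=E_{12}E_{22}^{-1}$, $-\nu=-E_{12}E_{22}^{-1}(E_{11}-E_{12}E_{22}^{-1}E_{21})x$, $-\mu=-E_{22}^{-T}\xi$, and $c^{-1}=\alpha_E^{-1}e^{2\pi i E_{11}E_{12}^{-T}\xi\cdot x}$, then identify $\alpha_E^{-1}=|\det E|^{-1/2}|\det(E_{22}E_{12})|^{1/2}=|(\det E)(\det E_{12}^{-1})(\det E_{22}^{-1})|^{-1/2}=\tilde\alpha_E$. This produces exactly \eqref{pi-A-dec-inv}. One should also note that each factor is a bounded invertible operator (the modulation and translation are unitary on $L^2(\rd)$, the rescaling $\mathfrak{T}_{E_{22}E_{12}^{-1}}$ is unitary since it is normalized, and $c\neq0$ because $E_{22},E_{12}\in GL(d,\bR)$ by right-regularity), so the composition is a genuine bijection of $\cS(\rd)$ and of $L^2(\rd)$, and the stated formula is its two-sided inverse.

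There is essentially no obstacle here; the only point requiring a modicum of care is the order of composition — since $\pi_\cA\phas$ applies $\mathfrak{T}_B$ first, then $T_\nu$, then $M_\mu$ (reading the operator product right to left), the inverse must undo them in the reverse order, applying $M_{-\mu}$ first, then $T_{-\nu}$, then $\mathfrak{T}_{B^{-1}}$, which matches the ordering $\mathfrak{T}_{E_{12}E_{22}^{-1}}T_{-\nu}M_{-\mu}$ in \eqref{pi-A-dec-inv}. One may optionally double-check the identity by verifying $\pi_\cA^{-1}\phas\,\pi_\cA\phas=\Id$ on a test function, but this is immediate from the algebra of the four elementary factors and need not be written out.

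\begin{proof}
Fix $\phas\in\rdd$. By Proposition \ref{piAdec} we may write
\[
\pi_\cA\phas=c\,M_\mu T_\nu \mathfrak{T}_B,
\]
where $c=\alpha_E e^{-2\pi i E_{11}E_{12}^{-T}\xi\cdot x}\in\bC\setminus\{0\}$, $\mu=E_{22}^{-T}\xi\in\rd$, $\nu=E_{12}E_{22}^{-1}(E_{11}-E_{12}E_{22}^{-1}E_{21})x\in\rd$, and $B=E_{22}E_{12}^{-1}\in GL(d,\bR)$ (the last since $E$ is right-regular, so $E_{12},E_{22}$ are invertible). Each of $M_\mu$, $T_\nu$ is unitary on $L^2(\rd)$ and bijective on $\cS(\rd)$, and $\mathfrak{T}_B$ is a (normalized, hence unitary) metaplectic operator with $\mathfrak{T}_B^{-1}=\mathfrak{T}_{B^{-1}}$; moreover $M_\mu^{-1}=M_{-\mu}$ and $T_\nu^{-1}=T_{-\nu}$. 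Since $c\neq0$, $\pi_\cA\phas$ is invertible and
\[
\pi_\cA^{-1}\phas=c^{-1}\,\mathfrak{T}_B^{-1}T_\nu^{-1}M_\mu^{-1}=c^{-1}\,\mathfrak{T}_{B^{-1}}T_{-\nu}M_{-\mu}.
\]
Substituting $B^{-1}=E_{12}E_{22}^{-1}$, $-\nu=-E_{12}E_{22}^{-1}(E_{11}-E_{12}E_{22}^{-1}E_{21})x$, $-\mu=-E_{22}^{-T}\xi$ and
\[
c^{-1}=\alpha_E^{-1}e^{2\pi i E_{11}E_{12}^{-T}\xi\cdot x},\qquad \alpha_E^{-1}=|\det E|^{-1/2}\,|\det(E_{22}E_{12})|^{1/2}=|(\det E)(\det E_{12}^{-1})(\det E_{22}^{-1})|^{-1/2}=\tilde\alpha_E,
\]
we obtain precisely \eqref{pi-A-dec-inv}.
\end{proof}
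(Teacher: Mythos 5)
Your proof is correct and follows exactly the paper's route: the paper's own (one-line) proof likewise inverts the composition in \eqref{atom-W-dec} factor by factor using $\mathfrak{T}_{E}^{-1}=\mathfrak{T}_{E^{-1}}$, $T_x^{-1}=T_{-x}$, $M_\xi^{-1}=M_{-\xi}$, which is what you do in more detail. Your bookkeeping of the constant, $\alpha_E^{-1}=|\det E|^{-1/2}|\det(E_{22}E_{12})|^{1/2}=\tilde\alpha_E$, and of the reversed order of the factors is accurate.
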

\begin{proof}
We use $\mathfrak{T}_{E}^{-1}=\mathfrak{T}_{E^{-1}}$, $T_x^{-1}=T_{-x}$, $M_\xi^{-1}=M_{-\xi}$.
\end{proof}
\begin{proposition}
	Assume that	$\mathcal{B}\in Sp(2d,\mathbb{R})$  is Wigner-decomposable, that is $\mathcal{B}=V_C\mathcal{A}_{FT2}\mathcal{D}_E$, with  $E$ in \eqref{matrix-L} right-regular. Then 
	\begin{equation}\label{wig-dec-atom}
		\pi_{\mathcal{B}}=\Phi_{-C}\pi_{\mathcal{A}},
	\end{equation}
where $\Phi_{-C}$ is the chirp function defined in \eqref{chirp} (with $-C$ instead of $C$), and $\pi_\cA$ is the metaplectic atom in \eqref{atom-W-dec}.
\end{proposition}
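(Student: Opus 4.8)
The plan is to reduce the Wigner-decomposable case $\mathcal{B}=V_C\mathcal{A}_{FT2}\mathcal{D}_E$ to the totally Wigner-decomposable case $\mathcal{A}=\mathcal{A}_{FT2}\mathcal{D}_E$ already handled in Proposition \ref{piAdec}, by tracking how the extra factor $V_C$ enters the metaplectic Wigner distribution. Since metaplectic operators compose (up to sign) according to the product of their symplectic projections, and since $\widehat{V_C}$ is multiplication by the chirp $\Phi_C$ by Example (i), we have $\widehat{\mathcal{B}}=\widehat{V_C}\,\widehat{\mathcal{A}}=\Phi_C\cdot\widehat{\mathcal{A}}$ as operators on $L^2(\rdd)$ (here $\Phi_C$ is the chirp on $\rdd$ built from the symmetric $2d\times 2d$ matrix $C$). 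Hence for all $f,g\in\cS(\rd)$,
\[
	W_{\mathcal{B}}(f,g)=\widehat{\mathcal{B}}(f\otimes\bar g)=\Phi_C\cdot\bigl(\widehat{\mathcal{A}}(f\otimes\bar g)\bigr)=\Phi_C\cdot W_{\mathcal{A}}(f,g).
\]

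The next step is to feed this identity into the definition of the metaplectic atom. By definition, $\la \pi_{\mathcal{B}}(z)f,\varphi\ra=\overline{W_{\mathcal{B}}(\varphi,f)(z)}=\overline{\Phi_C(z)}\,\overline{W_{\mathcal{A}}(\varphi,f)(z)}=\overline{\Phi_C(z)}\,\la\pi_{\mathcal{A}}(z)f,\varphi\ra$ for every $\varphi\in\cS(\rd)$. Since $\Phi_C(z)=e^{i\pi Cz\cdot z}$ has modulus one and $\overline{\Phi_C(z)}=\Phi_{-C}(z)$, and since $\varphi$ is arbitrary, we conclude $\pi_{\mathcal{B}}(z)=\Phi_{-C}(z)\,\pi_{\mathcal{A}}(z)$ as operators $\cS(\rd)\to\cS'(\rd)$, for every $z\in\rdd$. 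Written without the argument, $\pi_{\mathcal{B}}=\Phi_{-C}\pi_{\mathcal{A}}$, where $\Phi_{-C}$ is understood as the scalar-valued function of $z=\phas$ multiplying the operator-valued map $z\mapsto\pi_{\mathcal{A}}(z)$; substituting \eqref{atom-W-dec} gives the closed form if desired. Note that right-regularity of $E$ is inherited unchanged from the hypothesis and is exactly what makes $\pi_{\mathcal{A}}$ in \eqref{atom-W-dec} well defined, so no further assumption is needed.

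The one genuinely delicate point is bookkeeping the chirp's dimension and normalization: $\Phi_C$ in \eqref{chirp} is defined on $\rd$ for a $d\times d$ matrix, whereas here $C\in\bR^{2d\times 2d}$ acts on $\rdd$, so one must be explicit that $\Phi_{-C}$ in \eqref{wig-dec-atom} denotes $e^{-i\pi C z\cdot z}$ with $z\in\rdd$; this is the same convention used implicitly in the definition $V_C$ for the $2d$-dimensional symplectic group. A secondary subtlety is the sign ambiguity of metaplectic operators: $\widehat{V_C\mathcal{A}_{FT2}\mathcal{D}_E}=\pm\widehat{V_C}\,\widehat{\mathcal{A}_{FT2}\mathcal{D}_E}$, but since both $W_{\mathcal{B}}$ and $\pi_{\mathcal{B}}$ are defined from a fixed choice of $\widehat{\mathcal{B}}$, and the identity is homogeneous of degree one in $\widehat{\mathcal{B}}$, the sign cancels consistently on both sides; one should remark this so the statement is unambiguous. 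Everything else is a direct unwinding of definitions.
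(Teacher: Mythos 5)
Your proposal is correct and follows essentially the same route as the paper, whose proof consists precisely of the observation that $\widehat{V_C}$ is multiplication by the chirp $\Phi_C$ followed by "a straightforward computation" — which you have carried out explicitly, correctly identifying that the conjugation in the definition of the metaplectic atom is what turns $\Phi_C$ into $\Phi_{-C}$. Your remarks on the $2d$-dimensional interpretation of the chirp and the sign ambiguity of metaplectic operators are sensible bookkeeping points but do not change the argument.
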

\begin{proof}
	The proof  uses that $\widehat{V_C}$ is the multiplication operator by the chirp $\Phi_C$. The rest is a straightforward computation. 
\end{proof}

As a consequence, the computation of the inverse $	\pi_{\mathcal{B}}^{-1}$ is immediate:
\begin{corollary}
Under the assumptions of the previous proposition, 
\begin{equation}\label{pi-B-inv}
		\pi_{\mathcal{B}}^{-1}=\Phi_{C}\pi_{\mathcal{A}}^{-1},
\end{equation}
with $\pi_{\mathcal{A}}^{-1}$ computed in \eqref{pi-A-dec-inv}.
\end{corollary}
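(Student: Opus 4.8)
The plan is to invert the factorization $\pi_{\mathcal{B}}=\Phi_{-C}\pi_{\mathcal{A}}$ of the preceding proposition pointwise in the parameter $\phas\in\rdd$. For each fixed $\phas$, the right-hand side of \eqref{wig-dec-atom} is the composition of the operator $\pi_{\mathcal{A}}\phas$ — which is invertible by the previous corollary (thanks to the right-regularity of $E$), with inverse given explicitly by \eqref{pi-A-dec-inv} — and multiplication by the complex \emph{number} $\Phi_{-C}\phas$. Since a scalar commutes with every linear operator, it suffices to invert this scalar factor.

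First I would observe that $C$ is real and symmetric, so $C\phas\cdot\phas\in\bR$ and $\Phi_{-C}\phas=e^{-i\pi C\phas\cdot\phas}$ has modulus one; hence $\Phi_{-C}\phas^{-1}=\overline{\Phi_{-C}\phas}=\Phi_{C}\phas$. Then
\[
	\pi_{\mathcal{B}}\phas^{-1}=\bigl(\Phi_{-C}\phas\,\pi_{\mathcal{A}}\phas\bigr)^{-1}=\Phi_{-C}\phas^{-1}\,\pi_{\mathcal{A}}\phas^{-1}=\Phi_{C}\phas\,\pi_{\mathcal{A}}\phas^{-1},
\]
and substituting the explicit expression \eqref{pi-A-dec-inv} for $\pi_{\mathcal{A}}^{-1}$ gives \eqref{pi-B-inv}.

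There is essentially no obstacle here: all the content was already absorbed into \eqref{wig-dec-atom}, and this corollary is a one-line algebraic consequence. The only point deserving (minimal) care is to read $\Phi_{-C}$ in \eqref{wig-dec-atom} as a unimodular, scalar-valued function of the atom parameter $\phas$ — rather than as a multiplication operator acting on the argument of $\pi_{\mathcal{A}}\phas$ — so that its inverse is simply its complex conjugate $\Phi_{C}$ and it commutes freely past $\pi_{\mathcal{A}}\phas^{-1}$.
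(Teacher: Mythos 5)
Your proof is correct and is exactly the computation the paper leaves implicit: the paper states this corollary as an ``immediate'' consequence of $\pi_{\mathcal{B}}=\Phi_{-C}\pi_{\mathcal{A}}$ without writing out a proof. Your key observation --- that $\Phi_{-C}\phas=e^{-i\pi C\phas\cdot\phas}$ is a unimodular scalar in the atom parameter $\phas$ (since $C$ is real symmetric), so its inverse is $\Phi_{C}\phas$ and it commutes past the invertible operator $\pi_{\mathcal{A}}\phas$ --- is precisely the intended point.
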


\section{Wigner-decomposable Gabor frames}
In this section we focus on frame theory e exhibit new Gabor-type frames, which originate from Wigner decomposable distributions.
\begin{definition}
	Let $W_\cA$ be a metaplectic Wigner distribution, $g\in L^2(\rd)\setminus\{0\}$ and $\Lambda\subseteq\rdd$ be a discrete set. A \textbf{metaplectic Gabor frame} of $L^2(\rd)$ is a family $\cG_\cA(g,\Lambda)=\{\pi_\cA(\lambda)g\}_{\lambda\in\Lambda}$ that enjoys the following properties: \\
	(i) $\pi_\cA(\lambda)g\in L^2(\rd)$ for all $\lambda\in\Lambda$;\\
	(ii) there exist $A,B>0$ such that
	\[
		A\norm{f}_2^2\leq\sum_{\lambda\in\Lambda}|W_\cA(f,g)(\lambda)|^2\leq B\norm{f}_2^2, \qquad \forall f\in L^2(\rd).
	\]
\end{definition}

The metaplectic Gabor frames related to the STFT are simply referred to \textit{Gabor frames}.

\begin{theorem}\label{main}
	Let $E\in GL(2d,\bR)$ be right-regular and $W_\cA=\cF_2\mathfrak{T}_E$ be the associated totally  Wigner decomposable distribution. Let $g\in L^2(\rd)\setminus\{0\}$ and $\Lambda\subseteq\rdd$ be a discrete set. The following statements are equivalent:\\
	(i) $\cG_\cA(g,\Lambda)$ is a metaplectic Gabor frame with bounds $A$ and $B$;\\
	(ii) $\cG(\mathfrak{T}_{E_{22}E_{12}^{-1}}g,\cE\Lambda)$ is a Gabor frame with bounds $$\frac{|\det(E_{12}E_{22})|}{|\det(E)|}A\quad \mbox{ \,\,\,\, and\,\,\,\,\,\,\,} \frac{|\det(E_{12}E_{22})|}{|\det(E)|}B,$$
	with $\cE$ being the diagonal matrix
	\[
		\cE :=\begin{pmatrix}
			E_{12}E_{22}^{-1}(E_{11}-E_{12}E_{22}^{-1}E_{21}) & 0_{d\times d}\\
			0_{d\times d} & E_{22}^{-T}
		\end{pmatrix}.
	\]
\end{theorem}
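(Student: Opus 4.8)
The plan is to express the metaplectic Gabor frame condition in terms of the ordinary STFT by using the explicit formula for the atom $\pi_\cA\phas$ from Proposition \ref{piAdec}. Recall that $W_\cA(f,g)(\lambda)=\la f,\pi_\cA(\lambda)g\ra$, so the middle term of the frame inequality in $(i)$ is $\sum_{\lambda\in\Lambda}|\la f,\pi_\cA(\lambda)g\ra|^2$. Plugging in \eqref{atom-W-dec}, we have $\pi_\cA\phas g=\alpha_E e^{-2\pi i E_{11}E_{12}^{-T}\xi\cdot x}M_{E_{22}^{-T}\xi}T_{E_{12}E_{22}^{-1}(E_{11}-E_{12}E_{22}^{-1}E_{21})x}\mathfrak{T}_{E_{22}E_{12}^{-1}}g$. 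The unimodular chirp factor $e^{-2\pi i E_{11}E_{12}^{-T}\xi\cdot x}$ has absolute value $1$ and disappears once we take $|\la f,\pi_\cA(\lambda)g\ra|^2$. What remains inside the inner product is, up to the constant $\alpha_E$, a genuine time-frequency shift $\pi(\cE\lambda)$ of the rescaled window $\mathfrak{T}_{E_{22}E_{12}^{-1}}g$, where $\cE$ is precisely the diagonal matrix in the statement: the translation parameter is $E_{12}E_{22}^{-1}(E_{11}-E_{12}E_{22}^{-1}E_{21})x$ and the modulation parameter is $E_{22}^{-T}\xi$. (One should double-check that $M_\xi T_x$ versus $T_x M_\xi$ ordering only introduces another unimodular factor, which again vanishes under the modulus.)

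Thus $|W_\cA(f,g)(\lambda)|^2=\alpha_E^2\,|\la f,\pi(\cE\lambda)\,\mathfrak{T}_{E_{22}E_{12}^{-1}}g\ra|^2=\alpha_E^2\,|V_{\mathfrak{T}_{E_{22}E_{12}^{-1}}g}f(\cE\lambda)|^2$, and summing over $\lambda\in\Lambda$ turns the frame inequality into
\[
A\norm{f}_2^2\leq\alpha_E^2\sum_{\mu\in\cE\Lambda}|V_{\mathfrak{T}_{E_{22}E_{12}^{-1}}g}f(\mu)|^2\leq B\norm{f}_2^2,\qquad f\in L^2(\rd),
\]
which is exactly the statement that $\cG(\mathfrak{T}_{E_{22}E_{12}^{-1}}g,\cE\Lambda)$ is a Gabor frame with bounds $A/\alpha_E^2$ and $B/\alpha_E^2$. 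Since $\alpha_E^2=|\det E|\,|\det(E_{22}E_{12})|^{-1}$, we have $1/\alpha_E^2=|\det(E_{12}E_{22})|/|\det E|$, giving the bounds in $(ii)$. The chain of equalities is reversible, so the two statements are equivalent. I would also note that $E$ right-regular guarantees $E_{12},E_{22}$ invertible, so $\mathfrak{T}_{E_{22}E_{12}^{-1}}$ is a well-defined invertible rescaling and $\cE$ is an invertible diagonal matrix, hence $\cE\Lambda$ is again a discrete set; and condition $(i)$ of the definition of metaplectic Gabor frame ($\pi_\cA(\lambda)g\in L^2$) is automatic here because $\mathfrak{T}_{E_{22}E_{12}^{-1}}g\in L^2(\rd)$.

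The main obstacle is purely bookkeeping: carefully tracking the several unimodular phase factors (the chirp $e^{-2\pi i E_{11}E_{12}^{-T}\xi\cdot x}$ and the commutation factor $e^{2\pi i \cdot}$ from rewriting $M T$ as $\pi$) to be sure they all have modulus one and therefore drop out when one passes to $|\cdot|^2$ — so that the rescaling $\mathfrak{T}_{E_{22}E_{12}^{-1}}$ and the linear reparametrization $\lambda\mapsto\cE\lambda$ are the only non-trivial transformations. There is no analytic difficulty; once the atom formula \eqref{atom-W-dec} is in hand, the result is a direct substitution. The only mild subtlety worth a sentence in the writeup is confirming that the substitution $\mu=\cE\lambda$ is a bijection of $\Lambda$ onto $\cE\Lambda$ so the two sums genuinely coincide term by term, which holds because $\cE$ is invertible.
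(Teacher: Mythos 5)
Your proposal is correct and follows essentially the same route as the paper's proof: substitute the explicit atom formula \eqref{atom-W-dec} into $\sum_{\lambda\in\Lambda}|\la f,\pi_\cA(\lambda)g\ra|^2$, discard the unimodular chirp and commutation phases under the modulus, identify the remaining operator as $\pi(\cE\lambda)\mathfrak{T}_{E_{22}E_{12}^{-1}}g$, and reindex the sum over $\cE\Lambda$ using that $\cE$ is invertible (which the paper justifies via the Schur complement $E_{11}-E_{12}E_{22}^{-1}E_{21}$). Your bookkeeping of the constant $\alpha_E^2=|\det E|\,|\det(E_{22}E_{12})|^{-1}$ and the resulting frame bounds matches the statement exactly.
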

\begin{proof}
	Using (\ref{atom-W-dec}), for all $f\in L^2(\rd)$:
	\begin{align*}
		\sum_{\lambda\in\Lambda}&|W_\cA(f,g)(\lambda)|^2=\sum_{\lambda\in\Lambda}|\la f,\pi_\cA(\lambda)g \ra|^2\\
		&=\frac{|\det(E)|}{|\det(E_{12}E_{22})|}\sum_{(\lambda_1,\lambda_2)\in\Lambda}|\la f, M_{E_{22}^{-T}\lambda_2}T_{E_{12}E_{22}^{-1}(E_{11}-E_{12}E_{22}^{-1}E_{21})\lambda_1} \mathfrak{T}_{E_{22}E_{12}^{-1}}g\ra|^2\\
		&=\frac{|\det(E)|}{|\det(E_{12}E_{22})|}\sum_{(\lambda_1,\lambda_2)\in\Lambda}|\la f, \pi(E_{12}E_{22}^{-1}(E_{11}-E_{12}E_{22}^{-1}E_{21})\lambda_1,E_{22}^{-T}\lambda_2)\mathfrak{T}_{E_{22}E_{12}^{-1}}g \ra|^2\\
		&=\frac{|\det(E)|}{|\det(E_{12}E_{22})|}\sum_{\lambda\in\Lambda}|\la f, \pi(\cE \lambda)\mathfrak{T}_{E_{22}E_{12}^{-1}}g \ra|^2\\
		&=\frac{|\det(E)|}{|\det(E_{12}E_{22})|}\sum_{\mu\in\cE\Lambda}|\la f, \pi(\mu)\mathfrak{T}_{E_{22}E_{12}^{-1}}g \ra|^2,
	\end{align*}
	{where the matrix $\cE$ is invertible because $E$ is invertible, right-regular and $E_{11}-E_{12}E_{22}^{-1}E_{21}$ is its Schur's complement.}
\end{proof}

We mention the characterization of modulation spaces through shift-invertible Wigner-decomposable distributions.

\begin{theorem}\label{main2}
	Let $\hat\cA\in Mp(2d,\bR)$ and $\cA=\pi^{Mp}(\hat\cA)$ be such that the related metaplectic Wigner distribution $W_\cA$ is shift-invertible and Wigner-decomposable, with $\cA=V_C\cA_{FT2}\cD_E$, $C\in\bR^{2d\times2d}$ symmetric, $E\in GL(2d,\bR)$ right-regular with block decomposition (\ref{matrix-L}) and $\cA_{FT2}$ defined as in (\ref{defAFT2}). Let $m\in\cM_v(\rdd)$ satisfy
	\begin{equation}\label{weightcondm}
		m((E_{11}-E_{12}E_{22}^{-1}E_{21})\cdot,E_{12}^{-T}\cdot)\asymp m(\cdot,\cdot)
	\end{equation}
	and $0<p,q\leq\infty$. Then, \\
	(i) for any fixed $g\in\cS(\rd)\setminus\{0\}$,
	\[
		\norm{f}_{M^{p,q}_m}\asymp \norm{W_\cA(f,g)}_{L^{p,q}_m}, \qquad f\in \cS'(\rd).
	\]
	(ii) If $1\leq p,q\leq\infty$, the window $g$ can be chosen in the larger class $M^1_v(\rd)$.
	%(iii) If $\cG_\cA(g,\Lambda)$, for a given sequence $\Lambda\subseteq\rdd$, is a metaplectic Gabor frame
\end{theorem}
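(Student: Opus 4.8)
The plan is to reduce Theorem \ref{main2} to the known characterization of modulation spaces by shift-invertible distributions, namely Theorem \ref{thmSI}(i), by exploiting the explicit form of the atoms obtained in Proposition \ref{piAdec} together with the factorization $\cA = V_C\cA_{FT2}\cD_E$. The first step is to observe that the chirp multiplication $\Phi_{-C}$ appearing in \eqref{wig-dec-atom} (i.e.\ the passage from the totally Wigner-decomposable case to the general Wigner-decomposable case) does not affect the modulation space membership. Indeed $\Phi_C \cdot f = \widehat{V_C} f$ is a metaplectic operator, and metaplectic operators are bounded and boundedly invertible on every $M^{p,q}_m(\rd)$ provided the weight $m$ is $\cA$-admissible; more concretely, since $\Phi_{-C}$ is just a unimodular multiplication, $|W_\cB(f,g)(z)| = |W_\cA(f,g)(z)|$ pointwise whenever $\cB = V_C\cA_{FT2}\cD_E$ and $\cA = \cA_{FT2}\cD_E$, because $|\pi_\cB(z)f| $ and $|\pi_\cA(z)f|$ differ only by the chirp. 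Hence it suffices to prove the statement for the totally Wigner-decomposable distribution $W_{\cA_{FT2}\cD_E} = \cF_2\mathfrak{T}_E$, and then transfer it verbatim.

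For the totally Wigner-decomposable case, the key computation is already essentially contained in the proof of Theorem \ref{main}: from \eqref{atom-W-dec} we have, for every $f \in \cS'(\rd)$, $g \in \cS(\rd)\setminus\{0\}$,
\[
	|W_\cA(f,g)(x,\xi)| = \alpha_E \, |V_{\mathfrak{T}_{E_{22}E_{12}^{-1}}g} f(\cE(x,\xi))|,
\]
where $\cE$ is the invertible diagonal matrix of Theorem \ref{main} and $\alpha_E = |\det E|^{1/2}|\det(E_{22}E_{12})|^{-1/2}$. Thus computing $\norm{W_\cA(f,g)}_{L^{p,q}_m}$ amounts to a change of variables $u = \cE(x,\xi)$ in the mixed-norm integral. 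Because $\cE$ is block-diagonal with blocks $E_{11}-E_{12}E_{22}^{-1}E_{21}$ (acting on the time variable) and $E_{22}^{-T}$ (acting on the frequency variable), the change of variables respects the mixed $L^{p,q}$ structure: the inner $L^p$-integral in $x$ is transformed by the first block and the outer $L^q$-integral in $\xi$ by the second, up to the Jacobian constant $|\det\cE|$. This is where the hypothesis \eqref{weightcondm} enters: it says precisely that $m \circ \cE^{-1} \asymp m$ (one checks $\cE^{-1}$ has blocks $(E_{11}-E_{12}E_{22}^{-1}E_{21})^{-1}$ and $E_{22}^T$, matching the relation $m((E_{11}-E_{12}E_{22}^{-1}E_{21})\cdot, E_{12}^{-T}\cdot)\asymp m$ once we note $E_{12}^{-T} = (E_{22}^{-T})$ composed with the shift-invertibility block relation — here one uses the form of $E_\cA$ recalled in the Remark after Proposition \ref{piAdec}, where $E_\cA$ has upper-left block $(E_{11}-E_{12}E_{22}^{-1}E_{21})^{-1}$ and lower-right block $E_{12}^T$). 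Consequently $\norm{W_\cA(f,g)}_{L^{p,q}_m} \asymp \norm{V_h f}_{L^{p,q}_m} = \norm{f}_{M^{p,q}_m}$ with $h = \mathfrak{T}_{E_{22}E_{12}^{-1}}g \in \cS(\rd)\setminus\{0\}$, using that the STFT with any fixed nonzero Schwartz window gives an equivalent modulation-space quasi-norm.

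For part (ii), once $1 \leq p,q \leq \infty$ one invokes the standard fact that for these exponents the modulation space quasi-norm is actually a norm and the window in the defining STFT may be taken in $M^1_v(\rd)$ rather than in $\cS(\rd)$, with all resulting norms equivalent; since $\mathfrak{T}_{E_{22}E_{12}^{-1}}$ maps $M^1_v(\rd)$ into $M^1_{v'}(\rd)$ for a suitably rescaled weight $v'$ equivalent to $v$ on the relevant range, the argument above goes through unchanged with $g \in M^1_v(\rd)$. The main obstacle I anticipate is purely bookkeeping: carefully matching the weight condition \eqref{weightcondm} to the composition $m\circ\cE^{-1}\asymp m$ and verifying that the two diagonal blocks of $\cE$ really are $E_{11}-E_{12}E_{22}^{-1}E_{21}$ and $E_{22}^{-T}$ (not their inverses or transposes), which requires threading together the atom formula \eqref{atom-W-dec}, the $E_\cA$ block formula from the Remark, and the change-of-variables direction; the analysis itself is routine once these identifications are pinned down.
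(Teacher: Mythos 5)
Your overall strategy is the same as the paper's: strip off the chirp $\widehat{V_C}$ (which is unimodular in the output variable $z$, so $|W_{V_C\cA_{FT2}\cD_E}(f,g)|=|W_{\cA_{FT2}\cD_E}(f,g)|$ pointwise and the reduction to the totally Wigner\mbox{-}decomposable case is legitimate), rewrite $W_{\cA_{FT2}\cD_E}(f,g)$ as a chirp times an STFT with a dilated window evaluated at a block-diagonal linear change of variables, and then use the weight hypothesis plus window-independence of modulation space norms. This is exactly what the paper does, except that the paper obtains the STFT representation from the general block formulas of \cite{CGR2022} and defers the conclusion to \cite[Theorem 4.9]{CGR2022}.

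However, there is a genuine unresolved gap at the one step that actually matters: matching the hypothesis \eqref{weightcondm} to the change of variables. You take the change of variables to be the matrix $\cE$ of Theorem \ref{main}, whose diagonal blocks are $E_{12}E_{22}^{-1}(E_{11}-E_{12}E_{22}^{-1}E_{21})$ and $E_{22}^{-T}$, while \eqref{weightcondm} involves $E_{11}-E_{12}E_{22}^{-1}E_{21}$ and $E_{12}^{-T}$; these are different matrices in general, and your attempted reconciliation (``$E_{12}^{-T}=(E_{22}^{-T})$ composed with the shift-invertibility block relation'') is not a valid identity. You flag this as ``bookkeeping'' but it is precisely the point where the proof either closes or fails, and as written it fails. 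The resolution is to redo the change of variables $s=E_{11}x+E_{12}t$ in $\cF_2\mathfrak{T}_E(f\otimes\bar g)(x,\xi)$ from scratch: the phase $e^{-2\pi i\xi\cdot E_{12}^{-1}s}$ gives modulation parameter $E_{12}^{-T}\xi$ (not $E_{22}^{-T}\xi$), and the argument of $g$ is $E_{22}E_{12}^{-1}\bigl(s-(E_{11}-E_{12}E_{22}^{-1}E_{21})x\bigr)$, so the translation parameter is $(E_{11}-E_{12}E_{22}^{-1}E_{21})x$ without the extra factor $E_{12}E_{22}^{-1}$. In other words,
\[
|W_{\cA_{FT2}\cD_E}(f,g)(x,\xi)|=\alpha_E\,\bigl|V_{\mathfrak{T}_{E_{22}E_{12}^{-1}}g}f\bigl((E_{11}-E_{12}E_{22}^{-1}E_{21})x,\;E_{12}^{-T}\xi\bigr)\bigr|,
\]
which matches \eqref{weightcondm} exactly; this is the formula (with $c(x)$ and $d(\xi)$) that the paper's proof uses, and it indicates that the parameters printed in Proposition \ref{piAdec} and in the matrix $\cE$ of Theorem \ref{main} contain typos rather than that the weight condition needs massaging. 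With the corrected map your argument goes through: the map is block-diagonal, so the inner $L^p$ and outer $L^q$ integrals transform separately up to the Jacobian constant, and \eqref{weightcondm} is literally the invariance of $m$ under it.
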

\begin{proof}
	Let (\ref{blockDecA}) be the block decomposition of $\cA$. The expression of $W_\cA(f,g)$ in terms of the STFT and the blocks of $\cA$ is:
	\[
		W_\cA(f,g)(x,\xi)=\sqrt{|\det(E)|}|\det(A_{23})|^{-1}e^{2\pi i A_{23}^{-1}\xi\cdot A_{33}^Tx}V_{\tilde g}f(c(x),d(\xi)),
	\]
	where $\tilde g(t)=g(A_{24}^TA_{23}^{-T}t)$, $c(x)=(A_{33}^T-A_{23}^TA_{24}^{-T}A_{34}^T)x$ and $d(\xi)=A_{23}^{-1}\xi$. Using \cite[Remark 4.7]{CGR2022}, we can replace all the blocks of $\cA$ in this expression of $W_\cA$ with those of $E_\cA$:
	\[
		W_\cA(f,g)(x,\xi)=\sqrt{|\det(E)|}|\det(E_{12})|^{-1}e^{2\pi i(E_{12})^{-T}\xi\cdot E_{11}x}V_{\tilde g}f(c(x),d(\xi)),
	\]
	where $\tilde g(t)=g(E_{22}E_{12}^{-1}t)$, $c(x)=(E_{11}-E_{12}E_{22}^{-1}E_{21})x$ and $d(\xi)=E_{12}^{-T}\xi$. We refer to \cite[Theorem 4.9]{CGR2022} for the remaining details.
\end{proof}

Let $\cG_\cA(g,\Lambda)$ be a metaplectic Gabor frame related to the shift-invertible Wigner-decomposable distribution $W_\cA$. The associated \textit{coefficient operator} $C_\cA:L^2(\rd)\to \ell^2(\Lambda)$ is defined as
\[
	C_\cA f(\lambda):=W_\cA(f,g)(\lambda), \qquad f\in L^2(\rd),
\]
whereas its adjoint $D_\cA: \ell^2(\Lambda)\to L^2(\rd)$, called the \textit{reconstruction operator}, is given by 
\[
	D_\cA c=\sum_{\lambda\in\Lambda}c_\lambda\pi_\cA(\lambda)g, \qquad c=(c_\lambda)_{\lambda\in\Lambda}\in\ell^2(\Lambda).
\]
Their composition defines the so-called \textit{frame operator} $S_\cA=D_\cA C_\cA: L^2(\rd)\to L^2(\rd)$, namely:
\[	
	S_\cA f=\sum_{\lambda\in\Lambda}W_\cA(f,g)(\lambda)\pi_\cA(\lambda)g, \qquad f\in L^2(\rd).
\]

\begin{theorem}
	Let $W_\cA$ be as in Theorem \ref{main2}, and $\cG_\cA(g,\Lambda)$, $C_\cA$, $D_\cA$ and $S_\cA$ be defined as above. Let $0<p,q\leq\infty$ and $m\in \cM_v(\rdd)$ satisfying (\ref{weightcondm}). Then,\\
	(i) for every $0<p,q\leq\infty$, $C_\cA: M^{p,q}_m(\rd)\to\ell^{p,q}_m(\Lambda)$ and $D_\cA : \ell^{p,q}_m(\Lambda)\to M^{p,q}_m(\rd)$ continuously. Moreover, if $f\in M^{p,q}_m(\rd)$ and $\gamma_\cA:=\mathfrak{T}_{E_{22}E_{12}^{-1}}S_\cA^{-1}\mathfrak{T}_{E_{12}E_{22}^{-1}}g$, then
	\[
		f=\sum_{\lambda\in\Lambda}W_\cA(f,g)(\lambda)\pi_\cA(\lambda)\gamma_\cA, \qquad f\in M^{p,q}_m(\rd),
	\]
	with unconditional convergence in $M^{p,q}_m(\rd)$ ($0<p,q<\infty$) and unconditional weak*-$M^\infty_{1/v}$ convergence ($\max\{p,q\}=\infty$). \\
	$(ii)$ The following (quasi-)norms are equivalent on $M^{p,q}_m(\rd)$:
	\[
		A\|f\|_{M^{p,q}_m(\rd)}\leq \|(W_\cA(f,g))_{\lambda\in\Lambda}\|_{\ell^{p,q}_m(\Lambda)}\leq B \|f\|_{M^{p,q}_m(\rd)},
	\]
	\[ B^{-1}\|f\|_{M^{p,q}_m(\rd)} \leq  \|(W_\cA(f,g))_{\lambda\in\Lambda}\|_{\ell^{p,q}_m(\Lambda)}\leq A^{-1} \|f\|_{M^{p,q}_m(\rd)}.
	\]
\end{theorem}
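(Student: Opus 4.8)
The strategy is to transport everything to the STFT side via Theorem \ref{main} and its bookkeeping of constants, and then invoke the classical Gabor frame theory (as developed in the modulation-space references \cite{book,KB2020,Elena-book}) for the ordinary Gabor frame $\cG(h,\cE\Lambda)$ with $h:=\mathfrak{T}_{E_{22}E_{12}^{-1}}g$. First I would record the identity, already established in the proof of Theorem \ref{main2}, that
\[
	W_\cA(f,g)(x,\xi)=\kappa\,e^{2\pi i(E_{12})^{-T}\xi\cdot E_{11}x}\,V_{\tilde g}f(c(x),d(\xi)),
\]
with $\kappa=\sqrt{|\det E|}\,|\det E_{12}|^{-1}$, $\tilde g(t)=g(E_{22}E_{12}^{-1}t)=h(t)$, $c(x)=(E_{11}-E_{12}E_{22}^{-1}E_{21})x$ and $d(\xi)=E_{12}^{-T}\xi$; in other words $(c(x),d(\xi))=\cE(x,\xi)$ up to the reshuffling already used in Theorem \ref{main}. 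Taking absolute values kills the chirp, so $|W_\cA(f,g)(z)|=\kappa\,|V_hf(\cE z)|$. This is the one computation to display carefully; after it, the Wiener-amalgam/modulation-space norm of $W_\cA(f,g)$ equals, up to the constant $\kappa$ and the linear change of variables $\cE$ (whose Jacobian is constant and which, being block-diagonal with the stated blocks, respects the mixed-norm structure), the corresponding norm of $V_hf$.

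Next I would deal with the two operators. For $C_\cA$: by the sampling/discretization of the STFT on a lattice (equivalently, the fact that $\cG(h,\cE\Lambda)$ generates a Banach frame for all the $M^{p,q}_m$ simultaneously when $h\in M^1_v$ — this is the standard Gröchenig–Feichtinger machinery), the map $f\mapsto (V_hf(\cE\lambda))_\lambda$ is bounded $M^{p,q}_m\to\ell^{p,q}_{m'}$, where $m'=m\circ\cE^{-1}$; the hypothesis \eqref{weightcondm} is exactly $m'\asymp m$, so this lands in $\ell^{p,q}_m(\Lambda)$. Multiplying by the harmless constant $\kappa$ gives continuity of $C_\cA$. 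For $D_\cA$, I would argue by duality/transpose: $D_\cA$ is the synthesis operator $c\mapsto\sum_\lambda c_\lambda\pi_\cA(\lambda)g$, and using \eqref{atom-W-dec} each $\pi_\cA(\lambda)g$ is $\alpha_E$ times a time-frequency shift (at the point $\cE\lambda$, modulo the chirp prefactor $e^{-2\pi i E_{11}E_{12}^{-T}\xi\cdot x}$, which depends on $\lambda$ but has modulus one) of $h$; hence $D_\cA c=\alpha_E\sum_\lambda (c_\lambda e^{-i\theta_\lambda})\pi(\cE\lambda)h$, which is the classical Gabor synthesis operator precomposed with a unitary diagonal multiplier on $\ell^{p,q}_m$ and is therefore bounded $\ell^{p,q}_m(\Lambda)\to M^{p,q}_m(\rd)$ by the standard theory. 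The dual window formula $\gamma_\cA=\mathfrak{T}_{E_{22}E_{12}^{-1}}S_\cA^{-1}\mathfrak{T}_{E_{12}E_{22}^{-1}}g$ I would verify by checking $S_\cA=\mathfrak{T}_{E_{22}E_{12}^{-1}}S_h\mathfrak{T}_{E_{12}E_{22}^{-1}}$ (up to the constant $|\det E|/|\det(E_{12}E_{22})|$ from Theorem \ref{main}), where $S_h$ is the ordinary Gabor frame operator of $\cG(h,\cE\Lambda)$; intertwining $S_\cA^{-1}$ then conjugates the classical reconstruction formula $f=\sum_\mu \langle f,\pi(\mu)h\rangle\pi(\mu)S_h^{-1}h$ back to the metaplectic atoms, with the stated (quasi-)norm convergence inherited from the Gabor case (norm convergence for $p,q<\infty$, weak* for $\max\{p,q\}=\infty$).

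Finally, part $(ii)$ is then essentially a restatement: the first chain of inequalities is the boundedness of $C_\cA$ together with its lower bound, which after the change of variables $\cE$ and the constant $\kappa$ is precisely the Banach-frame inequality for $\cG(h,\cE\Lambda)$ in $M^{p,q}_m$ (valid because, $W_\cA$ being shift-invertible with upper-triangular $E_\cA$, Theorem \ref{thmSI}(i)/Theorem \ref{main2} gives $\norm{f}_{M^{p,q}_m}\asymp\norm{W_\cA(f,g)}_{L^{p,q}_m}\asymp\norm{(W_\cA(f,g)(\lambda))_\lambda}_{\ell^{p,q}_m}$ once $\Lambda$ is dense enough for the sampled version); the second chain follows from the first applied with the dual window, i.e. from the identity $\|f\|\asymp\|(W_\cA(f,\gamma_\cA)(\lambda))_\lambda\|$ and $W_\cA(f,\gamma_\cA)(\lambda)=\overline{\langle\gamma_\cA,\pi_\cA(\lambda)f\rangle}$, swapping the roles of frame and dual frame exactly as in the $L^2$ case where $A,B$ get replaced by $B^{-1},A^{-1}$. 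The main obstacle I anticipate is bookkeeping rather than conceptual: one must keep the three nuisances — the multiplicative constant $\kappa$ (resp. $\alpha_E$, $|\det E|/|\det(E_{12}E_{22})|$), the unimodular $\lambda$-dependent chirp $e^{-i\theta_\lambda}$, and the linear substitution $\cE$ — carefully aligned so that they cancel or are absorbed, and one must confirm that $\cE$ being block-diagonal (not merely invertible) is what lets the \emph{mixed}-norm $\ell^{p,q}_m$ and $L^{p,q}_m$ be preserved; the weight condition \eqref{weightcondm} is precisely the hypothesis that makes $m\circ\cE^{-1}\asymp m$ so that no weight is lost in the transfer.
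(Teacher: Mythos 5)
The paper does not reprove this result: its entire proof is the observation that the statement is \cite[Theorem 7.3]{CGshiftinvertible} specialized to Wigner-decomposable $\cA$, for which the operator $\widehat{\delta_\cA}$ appearing there becomes $\mathfrak{T}_{E_{12}E_{22}^{-1}}$. Your plan reconstructs that transference from scratch — rewrite $\pi_\cA(\lambda)$ as a unimodular chirp times $\pi(\cE\lambda)$ applied to the dilated window $h=\mathfrak{T}_{E_{22}E_{12}^{-1}}g$, and then quote the classical Gabor/Banach-frame theory for $\cG(h,\cE\Lambda)$ on $M^{p,q}_m$. That is the right idea and is, in substance, how the cited theorem is obtained; most of your outline (continuity of $C_\cA$ and $D_\cA$, absorption of the constant, the chirp, and the block-diagonal substitution $\cE$, the role of \eqref{weightcondm} as $m\circ\cE^{-1}\asymp m$) is sound, modulo the harmless conflation of $\tilde g(t)=g(E_{22}E_{12}^{-1}t)$ with the normalized $h=\mathfrak{T}_{E_{22}E_{12}^{-1}}g$.

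There is, however, one step that fails as written: the claimed intertwining $S_\cA=\mathfrak{T}_{E_{22}E_{12}^{-1}}S_h\mathfrak{T}_{E_{12}E_{22}^{-1}}$ (up to a constant). If you substitute \eqref{atom-W-dec} into both the analysis and the synthesis halves of $S_\cA$, the $\lambda$-dependent chirps cancel (one appears conjugated in $W_\cA(f,g)(\lambda)=\la f,\pi_\cA(\lambda)g\ra$, the other in $\pi_\cA(\lambda)g$ itself) and the dilation is already absorbed into the window $h$; what comes out is
\[
	S_\cA f=\alpha_E^2\sum_{\mu\in\cE\Lambda}\la f,\pi(\mu)h\ra\,\pi(\mu)h=\alpha_E^2\,S_{h}f ,
\]
i.e.\ a \emph{scalar multiple} of the ordinary frame operator of $\cG(h,\cE\Lambda)$, not a conjugate of it by the dilation. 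Your proposed identity would require $S_h$ to commute with $\mathfrak{T}_{E_{22}E_{12}^{-1}}$, which is false in general. The dual-window verification must therefore be run from $\alpha_E^2\,\mathfrak{T}_{E_{22}E_{12}^{-1}}\gamma_\cA=S_h^{-1}h$ instead, i.e.\ $\gamma_\cA$ is obtained by applying $\mathfrak{T}_{E_{12}E_{22}^{-1}}$ (not $\mathfrak{T}_{E_{22}E_{12}^{-1}}$) to the canonical dual window of the transferred frame; you should carry this out explicitly and reconcile it with the order of the two dilations in the statement (which is fixed by the convention in \cite[Theorem 7.3]{CGshiftinvertible} via $\widehat{\delta_\cA}=\mathfrak{T}_{E_{12}E_{22}^{-1}}$). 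A second, smaller point: for the quasi-Banach range $\min\{p,q\}<1$ the ``standard machinery'' you invoke needs the window in a smaller class than $M^1_v$ (Galperin--Samarah), so you should say explicitly that $g\in\cS(\rd)$, inherited from Theorem \ref{main2}, is what licenses the uniform treatment of all $0<p,q\leq\infty$.
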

\begin{proof}
	It is the transposition of \cite[Theorem 7.3]{CGshiftinvertible} for shift-invertible Wigner-decomposable distributions, observing that in this case: $\widehat{\delta_\cA}=\mathfrak{T}_{E_{12}E_{22}^{-1}}$.
\end{proof}

We conclude this section with the characterization of Wiener amalgam spaces $W(\cF L^p_{m_1},L^q_{m_2})(\rd)$.

\begin{theorem}\label{main3}
	Let $W_\cA$ be as in Theorem \ref{main2}. Let $0<p,q\leq\infty$ and $m_1,m_2\in\cM_v(\rd)$ be such that
	\begin{equation}\label{assumptWeight}
		m_1\otimes m_2\asymp (m_1\circ(-E_{12}^{-T}))\otimes(m_2\circ (E_{11}-E_{12}E_{22}^{-1}E_{21})).
	\end{equation}
	Then, for every $g\in\cS(\rd)\setminus\{0\}$, 
	\[
		\norm{f}_{W(\cF L^p_{m_1},L^q_{m_2})}\asymp \left(\int_{\rd}\left(\int_{\rd}|W_\cA(f,g)(x,\xi)|^pm_1(\xi)^pd\xi\right)^{q/p}m_2(x)dx\right)^{1/q},
	\]
	with the obvious adjustments for $\max\{p,q\}=\infty$.
\end{theorem}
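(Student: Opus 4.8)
The plan is to reduce the statement about Wiener amalgam spaces to the already-established characterization of modulation spaces in Theorem \ref{main2}, exploiting the identity $W(\cF L^p_{m_1},L^q_{m_2})(\rd)=\cF M^{p,q}_{m_1\otimes m_2}(\rd)$ recorded in Section \ref{sec:preliminaries}, together with the fact that the Fourier transform is a metaplectic operator. First I would observe that $\cF$ intertwines the two scales: $f\in W(\cF L^p_{m_1},L^q_{m_2})(\rd)$ precisely when $\hat f\in M^{p,q}_{m_1\otimes m_2}(\rd)$, with comparable (quasi-)norms. Next I would compute how composing $W_\cA$ with the Fourier transform in the window (or, equivalently, precomposing the first argument with $\cF$) produces another Wigner-decomposable distribution: since $\cF$ has projection $J\in Sp(d,\bR)$ and the Wigner-decomposable distributions form a class closed under composition with metaplectic operators of this simple type, $W_\cA(\cF^{-1}h,g)$ (for suitable $h$) equals, up to a chirp, $W_{\cA'}(h,g')$ for a new symplectic matrix $\cA'=V_{C'}\cA_{FT2}\cD_{E'}$ whose block $E'$ is obtained from $E$ by the action of the diagonal embedding of $J$. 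Concretely, right-multiplying $\cD_E$ by the symplectic matrix implementing $\cF$ in the first slot amounts to swapping/negating the appropriate $d\times d$ blocks of $E$, so that $E'_{11}=E_{12}$, $E'_{12}=-E_{11}$, $E'_{21}=E_{22}$, $E'_{22}=-E_{21}$ (up to signs), which is again right-regular exactly when $E$ is.

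Then I would apply Theorem \ref{main2} to $W_{\cA'}$. The weight controlling the modulation-space norm of $\hat f$ is $m_1\otimes m_2$, and the compatibility condition \eqref{weightcondm} for $\cA'$ reads, after substituting the primed blocks, $(m_1\otimes m_2)\big((E'_{11}-E'_{12}(E'_{22})^{-1}E'_{21})\cdot, (E'_{12})^{-T}\cdot\big)\asymp m_1\otimes m_2$. Working out $E'_{11}-E'_{12}(E'_{22})^{-1}E'_{21}$ and $(E'_{12})^{-T}$ in terms of the original blocks of $E$, one checks that this is exactly the hypothesis \eqref{assumptWeight} imposed in the statement: the first factor produces the argument $-E_{12}^{-T}$ acting on $m_1$ and the second produces $E_{11}-E_{12}E_{22}^{-1}E_{21}$ acting on $m_2$ (the roles of the two weights being exchanged under $\cF$). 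Hence Theorem \ref{main2} gives $\norm{\hat f}_{M^{p,q}_{m_1\otimes m_2}}\asymp\norm{W_{\cA'}(\hat f,g')}_{L^{p,q}_{m_1\otimes m_2}}$, and tracing back through the chirp and the change of window $g\mapsto g'$ — neither of which affects the $L^{p,q}$ quasi-norm, the chirp because it is a unimodular multiplier and the window change because different windows yield equivalent norms — converts the right-hand side into the claimed mixed Lebesgue integral of $|W_\cA(f,g)|$ against $m_1(\xi)^p$ and $m_2(x)$.

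The main obstacle I anticipate is purely bookkeeping: correctly tracking the block transformation $E\mapsto E'$ through the composition $\cF_2\mathfrak{T}_E\circ(\cF^{-1}\otimes\Id)$ at the level of symplectic matrices, keeping the chirp phase factor $\Phi_{C'}$ explicit, and then verifying that the transformed weight condition coincides verbatim with \eqref{assumptWeight} rather than merely being equivalent to it. A cleaner alternative, which I would pursue if the matrix algebra becomes unwieldy, is to bypass $\cA'$ altogether: write the explicit STFT expression for $W_\cA(f,g)$ from the proof of Theorem \ref{main2}, namely $W_\cA(f,g)(x,\xi)=\sqrt{|\det(E)|}|\det(E_{12})|^{-1}e^{2\pi i (E_{12})^{-T}\xi\cdot E_{11}x}V_{\tilde g}f(c(x),d(\xi))$ with $c(x)=(E_{11}-E_{12}E_{22}^{-1}E_{21})x$ and $d(\xi)=E_{12}^{-T}\xi$, substitute directly into the integral on the right-hand side of the claim, perform the linear changes of variables $x\mapsto c(x)$, $\xi\mapsto d(\xi)$, absorb the resulting Jacobians into the constant, and recognize the outcome as the $W(\cF L^p_{m_1},L^q_{m_2})$ quasi-norm of $f$ computed with the admissible window $\tilde g$ — the hypothesis \eqref{assumptWeight} being exactly what lets the pulled-back weights $m_1\circ(-E_{12}^{-T})$ and $m_2\circ(E_{11}-E_{12}E_{22}^{-1}E_{21})$ be replaced by $m_1$ and $m_2$. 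This second route reduces the whole proof to one change of variables and the invariance of the Wiener amalgam norm under the choice of window, which is the approach I would write up.
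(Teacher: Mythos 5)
Your proposal is correct, but it follows a genuinely different route from the paper. The paper disposes of the theorem in two lines: it invokes the general characterization of Wiener amalgam spaces by shift-invertible metaplectic Wigner distributions from \cite[Theorem 7.2]{CGshiftinvertible}, computes
\[
JE_\cA^{-1}J=\begin{pmatrix}-E_{12}^{-T} & 0_{d\times d}\\ 0_{d\times d} & -(E_{11}-E_{12}E_{22}^{-1}E_{21})\end{pmatrix},
\]
and checks that \eqref{assumptWeight} subsumes the two weight hypotheses of that external result. Your second (``cleaner alternative'') argument is essentially that external theorem unwound in the Wigner-decomposable case: substitute $|W_\cA(f,g)(x,\xi)|=\mathrm{const}\cdot|V_{\tilde g}f((E_{11}-E_{12}E_{22}^{-1}E_{21})x,E_{12}^{-T}\xi)|$, change variables, and use window-independence of the $W(\cF L^{p}_{m_1},L^{q}_{m_2})$ quasi-norm (legitimate since $\tilde g\in\cS(\rd)\setminus\{0\}$). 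This is correct and buys a self-contained proof, at the cost of redoing work the citation outsources. Two points should be made explicit in the write-up: first, the change of variables produces the pulled-back weight $m_1(E_{12}^{T}\xi')$ whereas \eqref{assumptWeight} involves $m_1(-E_{12}^{-T}\cdot)$, and reconciling the sign uses the standing evenness of weights in $\cM_v$ --- this is precisely the role of the condition $m_2(-\cdot)\asymp m_2(\cdot)$ in the paper's reference; second, \eqref{assumptWeight} is only a condition on the product $m_1\otimes m_2$, which nevertheless suffices because $m_2(c^{-1}(x'))^{p}$ can be pulled inside the inner $\xi'$-integral before the equivalence is applied and extracted again after raising to the power $q/p$. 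Your first route (conjugating by $\cF$ and reducing to Theorem \ref{main2} via $W(\cF L^p_{m_1},L^q_{m_2})=\cF M^{p,q}_{m_1\otimes m_2}$) is riskier than you suggest: it is not automatic that $\cA_{FT2}\cD_E$ composed with the symplectic embedding of $J$ is again of the form $V_{C'}\cA_{FT2}\cD_{E'}$, and under your proposed block swap right-regularity of $E'$ would correspond to \emph{left}-regularity of $E$, so the hypotheses would not transfer verbatim. Your decision to write up the direct computation is the right one.
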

\begin{proof}
	It follows by \cite[Theorem 7.2]{CGshiftinvertible}, observing that
	\[
		JE_\cA^{-1} J =\begin{pmatrix}
		-E_{12}^{-T} & 0_{d\times d}\\
		0_{d\times d} & -(E_{11}-E_{12}E_{22}^{-1}E_{21})
		\end{pmatrix}
	\]
	and, consequently, the assumption (\ref{assumptWeight}) embodies both the conditions $m_2(-\cdot)\asymp m_2(\cdot)$ and $m_1\otimes m_2\asymp (m_1\otimes m_2)\circ (JE_\cA^{-1}J)$ of \cite[Theorem 7.2]{CGshiftinvertible}.
\end{proof}

\section*{Acknowledgements}
The authors have been supported by the Gruppo Nazionale per l’Analisi Matematica, la Probabilità e le loro Applicazioni (GNAMPA) of the Istituto Nazionale di Alta Matematica (INdAM), University of Bologna and HES-SO Valais - Wallis School of Engineering. We acknowledge the support of The Sense Innovation and Research Center, a joint venture of the University of Lausanne (UNIL), The Lausanne University Hospital (CHUV), and The University of Applied Sciences of Western Switzerland – Valais/Wallis (HES-SO Valais/Wallis).


\begin{thebibliography}{10}
		\bibitem{BCGT2020} Bayer, D., Cordero, E., Gröchenig, K., \& Trapasso, S. I. (2020). Linear perturbations of the Wigner transform and the Weyl quantization. \emph{Advances in Microlocal and Time-Frequency Analysis}, 79-120.
	\bibitem{KB2020} A. B\'enyi \& K.A.Okoudjou. {\it Modulation Spaces With Applications to Pseudodifferential Operators and Nonlinear Schr\"{o}dinger Equations}, Springer New York, 2020.
	\bibitem{bogetal} Boggiatto, P., De Donno, G., \& Oliaro, A. (2010). Time-frequency representations of Wigner type and pseudo-differential operators. \emph{Transactions of the American Mathematical Society}, 362(9), 4955-4981.
	
	\bibitem{CGshiftinvertible}
	E. Cordero \& G. Giacchi. Symplectic Analysis of Time-Frequency Spaces. \emph{Journal de 	Mathématiques Pures et Appliquées}, to appear, arXiv:2212.13318.

	\bibitem{CG2023frames}
		Cordero, E., \& Giacchi, G. (2023). Metaplectic Gabor Frames and Symplectic Analysis of Time-Frequency Spaces. arXiv preprint arXiv:2304.04593.
	
	\bibitem{CGR2022}
	Cordero, E., Giacchi, G., \& Rodino, L. (2022). Wigner Analysis of Operators. Part II: Schr\" odinger equations. arXiv preprint arXiv:2208.00505.
	
	\bibitem{Elena-book} E. Cordero \& L. Rodino, \emph{Time-Frequency Analysis of Operators}. De Gruyter Studies in Mathematics, 2020.
	
	\bibitem{CR2022p1}
	Cordero, E., \& Rodino, L. (2022). Wigner analysis of operators. Part I: Pseudodifferential operators and wave fronts. \emph{Applied and Computational Harmonic Analysis}, 58, 85-123.
	
	\bibitem{CR2022}
	Cordero, E., \& Rodino, L. (2023). Characterization of modulation spaces by symplectic representations and applications to Schrödinger equations. \emph{Journal of Functional Analysis}, 284(9), 109892.
	
	\bibitem{CT2020} Cordero, E., \& Trapasso, S. I. (2020). Linear perturbations of the Wigner distribution and the Cohen class. \emph{Analysis and Applications}, 18(03), 385-422.
	
	\bibitem{F1} Feichtinger, H. G. (1983). Modulation spaces on locally compact abelian groups. Universit\"at Wien. Mathematisches Institut.
	Also in
	\newblock {\em Wavelets and Their Applications},
	M. Krishna, R. Radha,  S. Thangavelu, editors,
	\newblock Allied Publishers,  99--140,  2003.
	
	
		\bibitem{Galperin2004}
	Galperin, Y. V., \& Samarah, S. (2004). Time-frequency analysis on modulation spaces $M^{p,q}_m$, $0<p,q\leq\infty$. \emph{Applied and Computational Harmonic Analysis}, 16(1), 1-18.
	
	\bibitem{Giacchi}
	Giacchi, G. (2022). Metaplectic Wigner distributions. arXiv preprint arXiv:2212.06818.
	
	
		\bibitem{Gos11}
	M.~A. de~Gosson.
	\newblock {\em Symplectic methods in harmonic analysis and in mathematical
			physics}, volume~7 of {\em Pseudo-Differential Operators. Theory and
			Applications}.
	\newblock Birkh\"auser/Springer Basel AG, Basel, 2011.
	\bibitem{DGosson}
	de Gosson, M. A. (2015). Hamiltonian deformations of Gabor frames: first steps. \emph{Applied and computational harmonic analysis}, 38(2), 196-221.
	
	\bibitem{book}
	K.~Gr{\"o}chenig. {\it Foundations of time-frequency
		analysis}. Birkh\"auser Boston, Inc., Boston, MA, 2001.
\end{thebibliography}
\end{document}